\theoremstyle{plain}
\newtheorem{thm}{Theorem}[section]
\newtheorem{lem}[thm]{Lemma}
\newtheorem{prop}[thm]{Proposition}
\newtheorem{cor}[thm]{Corollary}
\theoremstyle{definition}
\newtheorem{defn}[thm]{Definition}
\newtheorem{rem}[thm]{Remark}
\newtheorem{ex}[thm]{Example}
\newcommand{\mc}{\mathcal}
\newcommand{\mf}{\mathfrak}
\newcommand{\Z}{\mathbb{Z}}
\DeclarePairedDelimiter\abs{\lvert}{\rvert}
\DeclarePairedDelimiter\pabs{\lvert}{\rvert_p}
\DeclarePairedDelimiter\norm{\lVert}{\rVert}
\newcommand{\word}[1]{\ensuremath{#1.\mathrm{word}}}
\newcommand{\edge}[1]{\ensuremath{#1.\mathrm{edge}}}
\newcommand{\leng}[1]{\ensuremath{#1.\mathrm{leng}}}
\newcommand{\step}{AlgStep }
\newcommand{\sstep}{InnerStep }
\newcommand{\gar}[1]{{\color{gray} \textbf{Guarantee: #1}}}
\newcommand{\tim}[1]{{\color{orange}  #1}}
\title{Nearly-linear solution to the word problem for 3-manifold groups}
\author{Alessandro Sisto}
	\address{Maxwell Institute and Department of Mathematics, Heriot-Watt University, Edinburgh, UK}
	\email{a.sisto@hw.ac.uk}
\author{Stefanie Zbinden}
   \address{Maxwell Institute and Department of Mathematics, Heriot-Watt University, Edinburgh, UK}
	\email{sz2020@hw.ac.uk}
\begin{document}

\begin{abstract}
    We show that the word problem for any 3-manifold group is solvable in time $O(n\log^3 n)$. Our main contribution is the proof that the word problem for admissible graphs of groups, in the sense of Croke and Kleiner, is solvable in $O(n\log n)$; this covers fundamental groups of non-geometric graph manifolds. Similar methods also give that the word problem for free products can be solved ``almost as quickly'' as the word problem in the factors.
\end{abstract}

\maketitle

\section{Introduction}

The goal of this paper is to complete the proof of the following theorem:

\begin{thm}
\label{thm:3_mflds}
    Let $M$ be a closed connected oriented 3-manifold. Then the word problem in $\pi_1(M)$ can be solved in $O(n\log^3(n))$ on a (multi-tape) Turing machine.
\end{thm}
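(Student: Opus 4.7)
The plan is to reduce the theorem to the two main technical ingredients stated in the abstract: the $O(n\log n)$ bound for the word problem in admissible graphs of groups, and the result that the word problem in a free product adds only a small overhead over its factors. The reduction proceeds through the standard structural decompositions of a closed oriented 3-manifold.

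First, by the Kneser--Milnor theorem, $\pi_1(M) = G_1 * \cdots * G_k$ where each $G_i$ is the fundamental group of a prime summand, so by the free-product result it suffices to bound the word problem in each $G_i$ with at most an extra logarithmic factor. Second, by geometrization and JSJ, each prime summand is either geometric or non-geometric. In the geometric case, $\pi_1$ is one of the eight Thurston types: the finite, virtually abelian, nilpotent, and solvable cases ($S^3$, $E^3$, Nil, Sol) admit word-problem algorithms in linear or $O(n\log n)$ time via explicit normal forms; the cases $S^2\times\R$, $\mathbb{H}^2\times\R$, and $\widetilde{\mathrm{SL}_2\mathbb{R}}$ reduce to a Fuchsian (hence word-hyperbolic) word problem together with a $\mathbb{Z}$ factor; and the closed hyperbolic case is handled by Holt's $O(n\log^2 n)$ algorithm for hyperbolic groups. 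In the non-geometric case $\pi_1$ fits the admissible graph-of-groups framework of Croke--Kleiner (for graph manifolds directly, and for mixed manifolds after absorbing the hyperbolic JSJ pieces, possibly via a relatively hyperbolic refinement), so the main technical result gives an $O(n\log n)$ algorithm at the level of the prime summand.

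Combining these, the dominant contribution is $O(n\log^2 n)$ from a hyperbolic vertex group, multiplied by a logarithmic overhead coming from the admissible graph-of-groups assembly, giving $O(n\log^3 n)$ at the level of each prime summand; the extra free-product logarithm is then absorbed into the same exponent of $\log$. The main obstacle in carrying out this plan is the mixed 3-manifold case: one has to check that the admissible-graph-of-groups machinery (or an extension of it) genuinely accommodates hyperbolic vertex groups with only a logarithmic overhead, so that the composition of word-problem bounds across JSJ and Kneser--Milnor preserves the target $O(n\log^3 n)$ bound rather than producing a worse power of $\log n$.
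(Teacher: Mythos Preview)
Your overall strategy---Kneser--Milnor followed by geometrisation and a case analysis---matches the paper's, but the details of the case analysis and the accounting of logarithms are both wrong, and the mixed-manifold case has a genuine gap.

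First, two factual errors in the geometric cases. The word problem in a hyperbolic group is solvable in \emph{linear} time via Dehn's algorithm; there is no $O(n\log^2 n)$ here. Conversely, you lump $Sol$ in with the ``linear or $O(n\log n)$'' cases, but in fact $Sol$ is the worst geometric case: its word problem is only known to be $O(n\log^2 n)$ (see \cite{average_case}). In the paper's accounting it is $Sol$, not a hyperbolic piece, that contributes the $\log^2 n$, and the third $\log$ comes from the free-product step (Proposition~\ref{prop:free_product}), not from the admissible assembly.

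Second, and more seriously, your treatment of non-geometric prime manifolds does not go through. Admissible graphs of groups in the sense of Definition~\ref{def:admissible} require every vertex group to be a $\mathbb Z$-central extension of a hyperbolic group; a genuinely hyperbolic JSJ piece (with cusps) is not of this form, so a mixed manifold simply does not ``fit the admissible graph-of-groups framework'' and there is no ``relatively hyperbolic refinement'' of Theorem~\ref{thm:admissible} available here. You identify this as the main obstacle, and indeed it is one you have not overcome. The paper bypasses it entirely: by \cite{mixed_virt_spec}, a non-geometric prime manifold with a hyperbolic piece has $\pi_1$ virtually embedding in a right-angled Artin group, and RAAGs have linear-time word problem. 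So mixed manifolds contribute only $O(n)$, graph manifolds contribute $O(n\log n)$ via Theorem~\ref{thm:admissible}, the worst prime case is $O(n\log^2 n)$ from $Sol$, and the free-product overhead gives the final $O(n\log^3 n)$.
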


Note that for closed connected oriented 1- and 2-manifolds the word problem can be solved in linear time (because their fundamental groups are abelian or hyperbolic), while for any $n\geq 4$ there are closed connected oriented $n$-manifolds whose fundamental group has unsolvable word problem. This is because of the well-known fact that any finitely presented group can be realised as the fundamental group of such a manifold, combined with the fact that there are finitely presented groups with unsolvable word problem \cite{Novikov:WP, Boone:WP}. For 3-manifolds, the word problem is known to be solvable, for example as a consequence of the Dehn function being at most exponential (because of the geometrisation theorem), see also \cite{auto-stackable}. For a large class of 3-manifold groups, \cite[Theorem 12.4.7]{word-processing} implies the existence of a quadratic time solution to the word problem, but we are not aware of faster-than-quadratic-time algorithms for arbitrary 3-manifold groups. We note that the word problem for mapping class groups was known to be solvable in quadratic time at the time when the first version of this paper was completed, and is now also known to be solvable in time $O(n\log^3(n))$, again on a Turing machine \cite{wp_mcg}.

In order to prove Theorem \ref{thm:3_mflds} we will use the geometrisation theorem. In particular, there are various cases for $\pi_1(M)$, with several being covered by known results in the literature. For example, in some cases $\pi_1(M)$ is hyperbolic, and it is well-known that the word problem can be solved in linear time (see for instance \cite[Theorem 2.18]{notes:hyp}, and \cite{Holt:real-time} for an even stronger result). In some other cases $\pi_1(M)$ is virtually special, that is, it virtually embeds in a right-angled Artin group, and those also have word problem that can be solved in linear time, see \cite{wp_monoids,wp_raags,conj_raags}. The main case that is not covered is that of closed graph manifolds (whose fundamental groups are not hyperbolic, and in some cases also not virtually CAT(0) \cite{Leeb:not_CAT0} and in particular not virtually special). Fundamental groups of graph manifolds have a certain graph of groups decomposition, and what we will show  is that the word problem can be solved quickly for any fundamental group of an \emph{admissible} graph of groups in the sense of Croke and Kleiner \cite{CK:admissible}, see Definition \ref{def:admissible}. This is a class of graphs of groups containing the ones relevant to Theorem \ref{thm:3_mflds}. Here is the precise statement of our theorem.

\begin{thm}
\label{thm:admissible}
    Let $\mc{G}$ be an admissible graph of groups. Then the word problem in $\pi_1(\mc{G})$ can be solved on a Turing machine in $O(n\log(n))$.
\end{thm}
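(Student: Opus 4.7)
The plan is to adapt the classical Bass--Serre normal form for graphs of groups into an efficient streaming algorithm, exploiting the linear-time word problem in the vertex groups and the very constrained algebraic structure of the edge groups of an admissible graph of groups.

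First, I set up the normal form. Recall that in an admissible $\mc{G}$, each vertex group $G_v$ is a central extension by $\Z$ of the fundamental group of a hyperbolic $2$-orbifold with boundary, and each edge group is $\Z^2$ containing the centres of the two adjacent vertex groups. In particular, the word problem in $G_v$ is solvable in linear time, and the membership problem for an edge subgroup inside $G_v$ is also solvable in linear time (the central $\Z$-factor is a direct summand, and the complementary factor is a boundary-parallel curve in the hyperbolic quotient, hence quasi-convex). Every element of $\pi_1(\mc{G})$ has a Bass--Serre reduced expression $g_0 t_1 g_1 t_2 \cdots t_k g_k$, with $t_i$ encoding edges traversed in the Bass--Serre tree and $g_i$ in a vertex group; such an expression represents the identity iff $k=0$ and $g_0 = 1$.

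Second, I design a streaming algorithm. Scan the input word left to right, maintaining the reduced form as a stack of syllables, each storing (a) its vertex, (b) its incoming stable letter/edge, and (c) a vertex-group element kept in a balanced-tree-based representation that supports concatenation and membership queries. Appending a new input generator either multiplies into the top vertex-group element or pushes a new syllable. After each step, we test whether the top vertex-group element lies in the relevant edge subgroup; if so, we pop the top two syllables, apply the edge isomorphism to transfer the edge-group element down, merge into the syllable below, and iterate.

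Third, the complexity analysis combines an amortized-potential argument bounding the total number of cancellation steps by $O(n)$ with the claim that each cancellation costs $O(\log n)$. The per-step logarithmic factor comes from edge-group membership testing and group-element multiplication on the compact syllable representations. I expect the main obstacle to be precisely this: implementing edge-group membership and multiplication in $O(\log n)$ amortized time on syllable elements that may themselves have length $\Theta(n)$. The way around this is to exploit the central extension structure of the vertex groups---tracking the $\Z$-coordinate as a running integer while maintaining the residue in the virtually hyperbolic quotient in a quasi-reduced (Dehn-type) form---together with a persistent data structure for the syllables so that a cancellation reuses rather than recomputes information. Making this bookkeeping consistent with concatenation at syllable boundaries, and verifying that the amortized potential decreases by $\Omega(1)$ per cancellation so that the total number of pinch reductions is $O(n)$, is the technical heart of the proof.
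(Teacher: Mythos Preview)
Your stack-based streaming algorithm is precisely the ``one iteration'' strategy that the paper analyses in Section~\ref{sec:free_prod} and rejects as too slow already for free products. The amortised $O(n)$ bound on the \emph{number} of cancellations is correct (each syllable is pushed once and popped at most once), but the claim that each cancellation costs $O(\log n)$ is the whole difficulty, and your proposal does not substantiate it. Concretely: after a pop you carry an edge-group element $w_{cur}$ down and must test whether $w_a\,w_{cur}\,u_i$ lies in the next edge group. Even storing finite-automaton states at every syllable (so that the contribution of $w_a$ is free), you must still feed $w_{cur}$ through the membership automaton, and $|w_{cur}|$ can be $\Theta(n)$ after a few cascaded reductions. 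The ``balanced-tree'' and ``persistent data structure'' suggestions do not help here: Dehn-reduced words do not concatenate in $O(\log n)$ (cancellation at the seam can cascade through the entire word), and there is no known $O(\log n)$ membership test for a quasiconvex subgroup of a hyperbolic group on elements represented implicitly. Tracking the central $\mathbb Z$-coordinate separately is fine, but the hyperbolic quotient is where the cost lies.

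The paper's actual mechanism is quite different. Rather than always cascading reductions, a \emph{middle derivation} carries a budget $c$ (the $w^0$-derived length of the piece that instigated the cascade) and \emph{stops} the cascade as soon as the cumulative derived length of the pieces consumed exceeds $c$, even if a further reduction is available. The key algebraic point (Lemma~\ref{lem:properties_of_vertex_groups}\eqref{prop:length_bound_for_edge_group}) is that the central coordinate of $w_{cur}$ can be ignored in the membership test, and the remaining $o_\alpha$-exponent $l$ is bounded by $K(|w_a|+|u_i|)$ whenever the test succeeds; hence each non-final InnerStep costs $\preceq c_{dif}$, and the whole cascade costs $\preceq \norm{u_{i_{start}}}_{w^0}$. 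One full pass is therefore linear in $|w^0|$, but may leave reducible pieces; Proposition~\ref{proposition:properties-of-middle-derivations}\eqref{prop:doubling} shows any surviving reducible piece has at least doubled its derived length, so $\lceil\log|w^0|\rceil$ passes suffice. The $\log n$ factor thus comes from the \emph{number of passes}, not from per-operation data-structure overhead. Your proposal has no analogue of this stopping criterion, and without it the cascading-cost analysis breaks down exactly as in the paper's Example following the ``one iteration'' algorithm.
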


Another ingredient is given in Section \ref{sec:more_alg}, where, adapting Dehn's algorithm, we show that the word problem for central extensions of hyperbolic groups is solvable in linear time on a Turing machine (Proposition \ref{prop:central_word_problem}). We could not find a reference for this, and we believe this is of independent interest.

We emphasise that we made the choice of using the most classical model of computation, Turing machines, throughout this paper. Other models of computations would require less care in the storage strategies and run-time analysis, but we would not get any improvement for the run-time in Theorem~\ref{thm:admissible}. In this sense, one should read Theorem \ref{def:admissible} as saying that the word problem for $\pi_1(\mc{G})$ can be solved in $O(n\log(n))$ \emph{even on a Turing machine}.

There is one additional ingredient that we need, namely that given a free product $G$ of groups whose word problem can be solved in linear time, the word problem for $G$ is solvable in $O(n\log n)$. This would be a special case of the algorithm for relatively hyperbolic groups given in \cite{Farb:rel_hyp}, but we were not able to reconstruct how to implement the algorithm given there in the claimed run-time. We believe it can be implemented in time at most $nf(n)$ (with the convention of Remark~\ref{rem:change_gen_set}) provided that the word problem in the peripheral subgroups can be solved in time at most $f(n)$ for some convex function $f(n)\geq n$. We explain in detail the difficulties encountered in the implementation in Section~\ref{sec:free_prod}, which already arise in the case of free products. Using tools that we developed to prove Theorem \ref{thm:admissible} we were still able to prove the following:

\begin{restatable}{prop}{freeprod}
\label{prop:free_product}
       Let $\mc G$ be a graph of groups with trivial edge groups, and suppose that the word problem in each vertex group is solvable on a Turing machine in time $\preceq f(n)$ for some convex function $f(n)\geq n$. Then the word problem in $G = \pi_1(\mc G)$ is solvable on a Turing machine in time $\preceq \log (n) f(n)$.
\end{restatable}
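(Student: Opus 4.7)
The plan is a divide-and-conquer normalization analogous to mergesort. Since every edge group is trivial, $G = \pi_1(\mc G)$ is a free product of the vertex groups $\{G_v\}$ with a free group on the edges outside a fixed spanning tree. Any element has a well-defined normal form as an alternating product of nontrivial syllables, each lying in a single factor, and a word represents the identity if and only if its normal form is empty. First I would parse the input of length $n$ into syllables (one pass over the input, since the factor generating sets are disjoint), then recursively normalize each of two halves of length $n/2$ and merge the results.

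The merge processes boundary syllables from the two sides inward: given the last syllable $a$ of the left normal form and the first syllable $b$ of the right, if they lie in different factors the merge stops; if they lie in the same factor, the word problem algorithm for that factor is invoked on the concatenation $ab$, and either (i) both syllables are deleted if $ab = 1$ in that factor and merging continues with the next pair, or (ii) they are replaced by the single syllable $ab$ (stored simply as a word in that factor's generators) and merging stops.

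For the cost analysis, the key observation is that in a single merge of two normal forms of combined length at most $n$, each boundary syllable is consumed by at most one word-problem call, so the call input lengths $s_1,\ldots,s_m$ satisfy $\sum s_i \leq n$. Convexity of $f$ together with $f(n)\geq n$ yields the superadditive bound $\sum f(s_i) \leq f(n) + O(1)$, so one merge costs $O(f(n))$. The recurrence $T(n) = 2T(n/2) + O(f(n))$ then contributes $O(f(n))$ at each of the $O(\log n)$ recursion levels (since $2^i f(n/2^i) \leq f(n) + O(2^i)$ by the same convexity), giving $T(n) = O(\log(n)\,f(n))$ as required.

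The main obstacle is to execute all of this on a multi-tape Turing machine so that tape-head movement does not spoil the bound -- the difficulties explained in Section \ref{sec:free_prod} for the Farb-style approach illustrate that this requires care. For this I would reuse the tape layouts and storage conventions developed in the proof of Theorem \ref{thm:admissible}, representing the intermediate normal forms in a format compatible with the recursion tree so that each merge at depth $d$ can access its two boundary syllables, copy them to a work tape for the word-problem oracle of the relevant factor, and write back the updated normal form in time proportional to the characters actually touched; aggregated across the recursion, this stays within the $O(\log(n)\,f(n))$ budget.
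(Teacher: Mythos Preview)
Your divide-and-conquer approach is correct and is in fact precisely the alternative the paper records in Remark~\ref{rem:divide-and-conquer} (attributed to Schleimer), whereas the paper's own proof goes through the machinery of \emph{middle derivations}: it iteratively produces words $w^0\succ w^1\succ\cdots$ where each $w^{i+1}$ is a middle derivation of $(w^i,w^0)$, shows (Proposition~\ref{prop:log-derivation-is-shortenend}) that reducible pieces double their $w^0$-derived length at each stage so that $w^{\lceil\log m\rceil}$ is shortened, and then proves (Lemma~\ref{lemma:quick-middle-der-of-free-prod}) that each middle derivation can be computed in time $\preceq f(|w^0|)$ by carefully tracking the counter $c$ and exploiting that $w_{cur}$ is always trivial when edge groups are trivial.

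The trade-off is this: your recursion is conceptually cleaner for free products, and your cost analysis via superadditivity of $f$ is correct (though the ``$+O(1)$'' should really be $+O(m)f(0)$, which is still absorbed since $f(n)\geq n\geq m$). The paper's middle-derivation framework, on the other hand, is built to generalise: the same scaffolding handles the admissible case with nontrivial $\mathbb Z^2$ edge groups, where there is no obvious analogue of ``merge two normal forms at the boundary''. On the Turing-machine side, the paper explicitly notes that your approach requires keeping track of where the $2^{\log m - i}$ segment boundaries sit at each level, and judges this about as delicate as tracking derived lengths; your final paragraph acknowledges but does not resolve this, so if you pursue this route you should spell out the tape layout (e.g.\ a bottom-up pass with segment markers, as sketched in the remark) rather than deferring to the storage conventions of Section~\ref{sec:storing-normal-words}, which were designed for the iterative scheme.
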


\subsection{Outline}

In Section \ref{sec:prelim} we discuss preliminaries on Turing machines and admissible graphs of groups. 

In Section \ref{sec:more_alg} we give additional algorithms, in particular solving the word problem for central extensions, and prove Theorem \ref{thm:3_mflds} using those and Theorem \ref{thm:admissible}.

In Section \ref{sec:free_prod} we explain the difficulties in solving the word problem for free products, which also arise for fundamental groups of admissible graphs of groups. This will explain why in later sections we have to be very careful with reduction procedures, and exactly what problems we should avoid.

In Section \ref{sec:admissible} we prove Theorem \ref{thm:admissible}, and using a very similar method also Proposition \ref{prop:free_product}. The main tool for this are middle derivations, which consist in a controlled sequence of ``reductions'' of a word. The key part of middle derivations is that they provide a careful plan for which reductions to perform at the various steps rather than performing reductions whenever possible. We believe that this tool can be useful to solve the word problem quickly in other classes of graphs of groups (where, in particular, the membership problem for the edge groups in the vertex groups can be solved quickly). An alternative approach was suggested to us by Saul Schleimer, see Remark \ref{rem:divide-and-conquer}.

\subsection*{Acknowledgments} We would like to thank Benson Farb and Saul Schleimer for helpful discussions and supportive comments. We would also like to thank the referees for their useful suggestions. The second author was supported by the European Union (ERC, SATURN, 101076148) and by the Deutsche Forschungsgemeinschaft (DFG, German Research Foundation) under Germany's Excellence Strategy - EXC-2047/1 - 390685813.

\section{Preliminaries}
\label{sec:prelim}

\subsection{General setup and description of Turing Machines}

In this paper, all Turing machines are assumed to be multi-tape Turing machines.

Given a Turing machine $\mc M$ over an alphabet $\mc A$ and with bands $\mc B$, every cell of a band either contains a letter of $\mc A$ or is empty.

\begin{defn}
    Let $\mc M$ be a Turing machine over the alphabet $\mc A$ and let $b\in\mc B$ be one of its bands. We say that $b$ \emph{reads} the word $w = a_0a_1\ldots a_k$, if $a_i\in \mc A$ is the letter in the $i$-th cell of $b$ for all $0\leq i \leq k$ and all other cells of $b$ are empty.
\end{defn}

Assume that $\mc A$ contains $\ast$ and $\#$. If a band $b$ reads $w = \ast w_1 \# w_2 \# \ldots \# w_k \ast $ for words $w_i$ over $\mc A -\{\ast, \#\}$, then we call $w_i$ the $i$-th \emph{segment} of $w$. We use the same terminology if $w$ ends with a $\#$ instead of a $\ast$ or if the last $\ast$ is not there. We say that the \emph{end} of any band is the cell with the highest index which is non-empty. If all cells of a band are empty, its end is cell 0.

In a few places in this paper we will need to store integers on a Turing machine. We will always do this by storing a string of either ``1'' or ``-1''.

With this, we can determine in constant time whether a given integer is negative, 0 or positive, and also adding or subtracting 1 to an integer takes constant time (if the head is at the end of the string). Note that storing binary expansions of integers does not have this last property.

In some places in Section~\ref{sec:admissible}, we describe procedures as a list of steps. In some steps the value of some of the variables, for example $x$ and $y$, changes. For example, we could have a step that sets $ y = 2x$ and then increases $x$ by one. Right after we describe the step, we want to point out some properties of the new (and old) values of the variables. To do so, we denote the old value of the variable by $old(\cdot)$ and the new value by $new(\cdot)$. So in the example above we might want to say that $new(y) = 2old(x)$ or that $new(y) = 2new(x) - 2$.

\begin{rem}
\label{rem:change_gen_set}
We denote by $\preceq$ the usual partial order on monotone functions $f:\mathbb N\to \mathbb R$ where $f\preceq g$ if and only if there exists a positive integer $C$ such that $f(n)\leq C g(Cn+C)+Cn+C$, with corresponding equivalence denoted by $\asymp$. Since rewriting a word in a generating set for a group in terms of a different generating set takes linear time in the length of the word, the time complexity of the word problem for a group does not depend on the generating set up to $\asymp$. Note that $f\preceq n \log^c(n)$ is equivalent to $f\in O(n\log^c(n))$, for $c\geq 1$. With a slight abuse, we will often write ``in time at most $f(n)$'' to mean that the algorithm that we are discussing has run time $g(n)$ for a function $g$ with $g\preceq f$.
\end{rem}

\subsection{Graphs of groups}

We now recall the definition of graph of groups, and set some notation and conventions. A graph in this context is given by a set of vertices and a set of directed edges, so each edge $\alpha$ has an initial vertex $\alpha^-$ and a terminal vertex $\alpha^+$. Also, for each edge $\alpha$ we include the edge $\bar\alpha$ with the reverse orientation, that is, $\bar\alpha^+=\alpha^-$ and $\bar\alpha^-=\alpha^+$. Loops are allowed in the graph, that is there could be edges $\alpha$ with $\alpha^-=\alpha+$, and double edges are also allowed; these are pairs of edges with the same initial and terminal vertices. We will always assume that graphs in our graphs of groups are finite, that is, that they have finitely many vertices and edges, and connected. The vertex set of $\Gamma$ is denoted $V(\Gamma)$, and the edge set is denoted $E(\Gamma)$.

A graph of groups is a tuple $\mathcal G=(\Gamma,\{G_\mu\},\{G_\alpha\},\{\tau_\alpha\})$ where:
\begin{itemize}
    \item $\Gamma$ is a graph.
    \item For each vertex $\mu$ of $\Gamma$ we have a group $G_\mu$ (these are called vertex groups),
    \item Similarly, for each edge $\alpha$ of $\Gamma$ we have a group $G_\alpha$ (these are called edge groups), and we require $G_{\alpha}=G_{\bar\alpha}$.
    \item For each edge $\alpha$ we have an injective homomorphism $\tau_\alpha:G_\alpha\to G_{\alpha^+}$ from the edge group to the vertex group at the terminal vertex of the edge.
\end{itemize}

The fundamental group $\pi_1(\mathcal G)$ of a graph of groups $\mathcal G$ as above is roughly the quotient of the free product of the vertex groups and the fundamental group of the graph by the relations coming from the edge groups. More precisely, fix a spanning tree for $\Gamma$, that is, a tree containing all vertices of $\Gamma$ (this choice will not matter up to isomorphism). Then the fundamental group of $\pi_1(\mathcal G)$ is defined as the quotient of
$$\left(\bigast_{\mu\in V(\Gamma)} G_\mu\right) \ast \left(\bigast_{\alpha\in E(\Gamma)}\langle t_\alpha\rangle\right)$$
by the relations
\begin{enumerate}
    \item $t_\alpha=t_{\bar\alpha}^{-1}$,
    \item $t_\alpha=1$ if $\alpha$ is an edge of the fixed spanning tree, and
    \item $t_\alpha\tau_\alpha(x)t_{\alpha}^{-1}=\tau_{\bar\alpha}(x)$ for all edges $\alpha$ and elements $x\in G_\alpha=G_{\bar\alpha}$.
\end{enumerate}

\subsection{Admissible graphs of groups}

We now recall the definition of admissible graphs of groups from \cite{CK:admissible}. Roughly, these are graphs of groups where the vertex groups are central extensions of hyperbolic groups, and they are glued together along $\mathbb Z^2$ subgroups that are virtually generated by pairs of central directions.

\begin{defn}
\label{def:admissible}
Let $\mc{G} = (\Gamma, \{G_\mu\}, \{G_\alpha\}, \{\tau_{\alpha}\})$ be a graph of groups.  We say $\mc{G}$ is \emph{admissible} if the following hold:
\begin{enumerate}
    \item \label{item:edges_exist} $\Gamma$ contains at least one edge.
	\item\label{item:virt_free} Each vertex group $G_\mu$ is a central extension 
	\[ Z_\mu \hookrightarrow G_\mu \stackrel{\pi_\mu}{\twoheadrightarrow} F_\mu \]
	where $Z_\mu$ is an infinite cyclic group and $F_\mu$ is a non-elementary hyperbolic group. 
	\item\label{item:edge_groups} Each edge group $G_\alpha$ is isomorphic to $\mathbb Z^2$ and has the following property: $\tau_{\alpha}(G_\alpha) = \pi^{-1}_{\alpha^+}(\langle a_{\alpha}\rangle)$ for some infinite cyclic subgroup $\langle a_{\alpha}\rangle < F_{\alpha^+}$. 
	\item\label{item:different_R} $\tau^{-1}_{\alpha}(Z_{\alpha^+}) \cap \tau^{-1}_{\bar{\alpha}}(Z_{\alpha^-})$ is the trivial subgroup of $G_\alpha$.
	\item\label{item:malnormal} For each vertex $\mu$, the collection of subgroups $\{\langle a_{\alpha}\rangle : \alpha^+ = \mu\}$ is malnormal in $F_\mu$ (see below for the definition). 
\end{enumerate}
\end{defn}

Recall that a collection of of subgroups $\{H_i\}$ of a group $G$ is malnormal if the following holds. If $g\in G$ and $i,j$ are such that $gH_ig^{-1}\cap H_j$ is infinite, then $i=j$ and $g\in H_i$.

We collect the main properties we need in the following lemma. For a word $w$, we denote its length by $\abs{w}$.

\begin{restatable}{lem}{edgegroups}
\label{lem:properties_of_vertex_groups}
    Let $\mc G$ be an admissible graph of groups with a fixed generating set for $\pi_1(\mathcal G)$. There exists a constant $K$ such that the following holds. For every edge $\alpha\in E(\Gamma)$, there exist elements $c_{\alpha}, o_{\alpha}\in \tau_{\alpha}(G_{\alpha})$ such that for all edges $\beta \in E(\Gamma)$, the following properties hold.
    \begin{enumerate}
        \item $c_{\alpha}$ and $o_{\alpha}$ generate a copy of $\mathbb Z^2$. Moreover, $\pi_{\alpha^+}(c_\alpha)$ is trivial.\label{item:abelian}
        \item\label{item:finite_index} $\langle c_{\alpha}, o_\alpha \rangle$ has finite index in $\tau_{\alpha}(G_{\alpha})$. We denote by $H_{\alpha}$ a finite set of coset representatives of cosets of $\langle c_{\alpha}, o_\alpha \rangle$.
        \item\label{item:no_name} $t_{\alpha} c_{\alpha}t_{\alpha}^{-1} = o_{\bar{\alpha}}$ (recall the generators $t_\alpha$ from the definition of $\pi_1(\mathcal G)$).\label{prop:conjugate_by_edge}
        \item If $\alpha^+ = \beta^+$, then $c_\alpha = c_{\beta}$.\label{prop:same_center}
        \item If $wo_{\alpha}^khw'\in \tau_{\alpha}(G_{\alpha})$ for some $h\in H_{\alpha}$ and some elements $w, w'\in G_{\alpha^+}$, then either both $w$ and $w'$ are in $\tau_{\alpha}(G_{\alpha})$ or $\abs{w}+ \abs{w'}\geq k/K$. \label{prop:length_bound_for_edge_group}
        \item The word $c_\alpha^ko_{\alpha}^\ell h$ is a $K$-quasi-geodesic in the given word metric on $\pi_1(\mathcal G)$ for all $k, \ell\in \Z$ and $h\in H_{\alpha}$.\label{prop:quasi-geodesic}
    \end{enumerate}
\end{restatable}

\begin{proof}
We let $c_\alpha$ be a generator of the centre $Z_{\alpha^+}$ of the vertex group $G_{\alpha^+}$, which lies in $\tau_\alpha(G_\alpha)$ by Definition \ref{def:admissible}-\eqref{item:edge_groups}. We let $o_\alpha$ be $\tau_\alpha(\tau_{\bar\alpha^{-1}}(c_{\bar\alpha}))$ (which should be interpreted as $c_{\bar\alpha}$ ``transported'' across $\alpha$ from $\alpha^-$ to $\alpha^+$).

    \eqref{item:abelian} By Definition \ref{def:admissible}-\eqref{item:different_R}, we have that $c_\alpha$ and $o_\alpha$ generate subgroups of $\tau_{\alpha}(G_{\alpha})$ that intersect trivially (since $\tau_\alpha$ is injective), and in particular $c_{\alpha}^ko_{\alpha}^\ell$ can be trivial only if $k=\ell=0$. By construction $\pi_{\alpha^+}(c_\alpha)$ is trivial.

    \eqref{item:finite_index} We have that $\langle c_{\alpha}, o_\alpha \rangle<\tau_{\alpha}(G_{\alpha})$ is isomorphic to $\mathbb Z^2$, since $c_\alpha$ and $o_\alpha$ have infinite order and they generate subgroups of an abelian group that intersect trivially. Since $\tau_{\alpha}(G_{\alpha})$ is also isomorphic to $\mathbb Z^2$, we must have that $\langle c_{\alpha}, o_\alpha \rangle$ has finite index.

\eqref{item:no_name} This follows from the construction of $o_\alpha$ and the third type of relation for $\pi_1(\mathcal G)$.

    \eqref{prop:same_center} This holds by construction of the $c_\alpha$.

    \eqref{prop:length_bound_for_edge_group} Fix the notation of the statement. Note that $g=o_\alpha^kh\in \tau_{\alpha}(G_\alpha)$. In particular, we cannot have that exactly one of $w$ and $w'$ lies in $\tau_\alpha(G_\alpha)$, for otherwise $wgw'$ is a product of two elements of $\tau_\alpha(G_\alpha)$ and one element outside of $\tau_\alpha(G_\alpha)$, so it cannot lie in $\tau_\alpha(G_\alpha)$. Suppose then that both $w$ and $w'$ do not lie in $\tau_\alpha(G_\alpha)$. Let now $\rho:G_{\alpha^+}\to \langle w a_\alpha \rangle$ be the composition of $\pi_{\alpha^+}$ (the homomorphism onto the hyperbolic group $F_{\alpha^+}$) and the closest-point projection to $w \langle a_\alpha \rangle$ (with respect to a fixed word metric). Recall that $\langle a_\alpha \rangle$ is malnormal in $F_{\alpha^+}$ by Definition \ref{def:admissible}-\eqref{item:malnormal}. Since it is cyclic, it is also quasiconvex, and in particular $F_{\alpha^+}$ is hyperbolic relative to $\langle a_\alpha \rangle$ \cite[Theorem 7.11]{Bow:rel_hyp}. 
    Recall that closest-point projections onto cosets of peripherals are coarsely Lipschitz, and that distinct cosets of peripherals have uniformly bounded closest-point projection onto each other, see \cite{S:proj}.
    Since $\pi_{\alpha^+}(w)\notin \langle a_\alpha \rangle$ we then have that $\rho(\tau_\alpha(G_\alpha))$ is contained in the $L\abs{w}$-neighborhood of $\pi_{\alpha^+}(w)$, for some constant $L$ depending only on $G_{\alpha^+}$. Similarly, $\rho(\tau_\alpha(wgw'G_\alpha))$ is contained in the $L\abs{w'}$-neighborhood of $\pi_{\alpha^+}(wg)$. But we are assuming $\tau_\alpha(wgw'G_\alpha)=\tau_\alpha(G_\alpha)$, so in order to conclude, we need to argue that $d(\pi_{\alpha^+}(wg), \pi_{\alpha^+}(w))=d(\pi_{\alpha^+}(q), \pi_{\alpha^+}(1))$ is at least $k/L'$ for some uniform $L'$. This follows from the fact that $o_\alpha$ maps to an infinite order element of the hyperbolic group $F_{\alpha^+}$, and therefore generates an undistorted infinite cyclic subgroup of $F_{\alpha^+}$, concluding the proof.

    \eqref{prop:quasi-geodesic} This follows from the fact that $\tau_\alpha(G_\alpha)$ is undistorted in $\pi_1(\mathcal G)$, see e.g. \cite[Corollary 2.16]{FK:connected}.
\end{proof}

\section{Auxiliary algorithms and word problem for 3-manifold groups}\label{sec:more_alg}

In this section we collect various auxiliary algorithms needed to prove Theorem \ref{thm:3_mflds} on 3-manifold groups and Theorem \ref{thm:admissible} on admissible graphs of groups. At the end we will also explain how to prove Theorem \ref{thm:3_mflds} using Theorem \ref{thm:admissible}, known results from the literature, and auxiliary algorithms.

We start with the following, which is well-known, but we include a proof for completeness.

\begin{lem}
\label{lem:finite_index}
    Let $H<G$ be a subgroup of finite index and let $f(n)\geq n$ be a function. If the word problem in $H$ can be solved in time at most $f(n)$, then the word problem in $G$ can also be solved in time at most $f(n)$. 
\end{lem}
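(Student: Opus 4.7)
The plan is to reduce the word problem in $G$ to a finite-state rewriting pass followed by a single call to the word problem algorithm for $H$. By Remark~\ref{rem:change_gen_set} the time complexity does not depend on the choice of generating set up to $\asymp$, so I would fix any generating set $\mc S_G$ for $G$ and $\mc S_H$ for $H$, together with right coset representatives $g_1 = 1, g_2, \ldots, g_k$ for $H$ in $G$, so that $G = \bigsqcup_i H g_i$. For each pair $(i, s)$ with $1 \leq i \leq k$ and $s \in \mc S_G \cup \mc S_G^{-1}$, there is a unique $\sigma(i, s) \in \{1, \ldots, k\}$ with $g_i s g_{\sigma(i,s)}^{-1} \in H$, and this element has some fixed expression $w_{i, s}$ as a word in $\mc S_H^{\pm 1}$. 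Both $\sigma$ and the words $w_{i, s}$ form a finite lookup table depending only on $G$, $H$, and the chosen generators, so they can be hard-coded into the states of the Turing machine.

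Given an input word $w = s_1 s_2 \cdots s_n$, the algorithm scans $w$ from left to right while maintaining a current coset index $i$, initialised to $1$. Upon reading $s_t$ it writes the block $w_{i, s_t}$ to a dedicated output tape and updates $i \gets \sigma(i, s_t)$. A short induction on $t$ shows that after $t$ steps the prefix $s_1 \cdots s_t$ equals, in $G$, the product of all blocks written so far times $g_i$. In particular, after the full pass the output tape holds a word $\tilde w$ over $\mc S_H^{\pm 1}$ satisfying $w = \tilde w \cdot g_{i_{\mathrm{final}}}$ in $G$, so $w$ is trivial in $G$ if and only if $i_{\mathrm{final}} = 1$ and $\tilde w$ is trivial in $H$.

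The preprocessing pass costs $O(n)$ time and produces $\tilde w$ of length at most $Cn$, where $C = \max_{i, s} \abs{w_{i, s}}$ is a constant. Checking $i_{\mathrm{final}} = 1$ is immediate, and running the assumed $H$-algorithm on $\tilde w$ costs $\preceq f(Cn) \asymp f(n)$; since $f(n) \geq n$, the total runtime is $\preceq f(n)$.

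The only point needing care is the Turing-machine bookkeeping: the blocks $w_{i, s_t}$ must be appended contiguously to a single dedicated tape so that the hand-off to the $H$-algorithm is just a pointer move rather than a costly copy. Since each block has bounded length and is written in order, this is routine and adds no more than a constant factor to the linear preprocessing pass. I expect this implementation detail to be the only genuinely subtle part; the group-theoretic content reduces entirely to the standard Schreier-style rewriting described above.
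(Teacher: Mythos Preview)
Your proposal is correct and follows essentially the same approach as the paper: both use the standard Schreier-style rewriting, tracking the current right coset in the finite state while appending the corresponding Schreier generator (or its fixed $\mc S_H$-expression) to an output tape, then checking that the final coset is $H$ and invoking the $H$-algorithm. The only cosmetic difference is that the paper takes the Schreier elements themselves as the generating set $T$ for $H$, so the rewritten word has length exactly $n$ rather than $Cn$, but this does not affect the argument.
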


\begin{proof}
    Fix a generating set $S$ for $G$ and a set $\mc C$ of right coset representatives for $H$, where $1$ is the representative for the coset $H$. It is well-known (and follows from arguments below) that $H$ is generated by $T=\{c_1sc_2\in H| c_i\in\mc C, s\in S\}$. We now describe a Turing machine $\mc M$ (with two tapes) which takes as input a word $w$ of length $n$ in $S$ and determines in linear time whether $w$ represents an element of $H$, and if so outputs a word $w'$ of length $n$ in $T$ representing the same element. This suffices to prove the proposition.

    The Turing machine $\mc M$ reads the input word $w=s_1\ldots s_n$, and inductively at step $k$ has kept track (in its state) of the unique coset representative $c_k\in\mc C$ such that $s_1\ldots s_k\in H c_k$, and has outputted (on the second tape) a word $w_k$ of length $k$ in $T$ that represents $s_1\ldots s_kc^{-1}_k$.
    
    If $k<n$, then $\mc M$ reads $s_{k+1}$ and updates $c_k$ to the unique coset representative $c_{k+1}\in \mc C$ of $Hc_ks_{k+1}$. In particular, we have that $s_1\ldots s_k s_{k+1}\in H c_{k+1}$. Moreover, $\mc M$ appends $c_ks_kc_{k+1}^{-1}$, as an element of $T$, to $w_k$ to form the word $w_{k+1}$ representing $s_1\ldots s_{k+1}c_{k+1}^{-1}$. 

    If at the end $c_n = 1$, then $w$ is in $H$ and $w_n$ is the desired word $w'$. Otherwise, $w$ is not in $H$ and hence not the trivial word.
\end{proof}

Next, we study central extensions of hyperbolic groups, which is useful for us since these are vertex groups of admissible graphs of groups, and also some 3-manifolds groups are of this form.

\begin{prop}
\label{prop:central_word_problem}
    Let $G$ be a $\mathbb Z$-central extension of a hyperbolic group, that is, $G$ is a group satisfying $\Z\hookrightarrow G{\twoheadrightarrow} H$ for some hyperbolic group $H$. Then there is an algorithm that takes as input a word in the generating set and determines whether it represents an element of (the image of) $\mathbb Z$, and if so which one, and which runs in time linear in the length of the input word.
\end{prop}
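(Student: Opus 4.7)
The plan is to adapt Dehn's algorithm for the hyperbolic quotient while simultaneously tracking the accumulated central element. Let $\pi\colon G\twoheadrightarrow F$ be the quotient with kernel $Z=\langle z\rangle$. I fix a generating set $S$ for $G$ containing $z$ and such that $\pi(S\setminus\{z\})$ generates $F$. A word $w$ in $S$ represents an element of $Z$ if and only if $\pi(w)=1$ in $F$, and in that case we must determine the unique integer $N$ with $w=z^N$ in $G$.

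As a preprocessing step, done once independently of the input, I use hyperbolicity of $F$ to fix a Dehn presentation: a finite collection of rewriting rules $u_i\mapsto v_i^{-1}$ with $\abs{u_i}>\abs{v_i}$, where each $u_iv_i$ is a relator of $F$, and such that every nontrivial word representing the identity in $F$ contains some $u_i$ as a subword. For each rule the word $u_iv_i$ lifts in $G$ to an element of $Z$, hence equals $z^{n_i}$ for a well-defined integer $n_i$; I compute and store this finite list of integers together with the rules.

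Given an input word $w$ of length $n$, the algorithm proceeds in two phases. First, in a single left-to-right pass I produce a word $w_F$ over $S\setminus\{z\}$ by deleting all occurrences of $z^{\pm 1}$ from $w$, while maintaining a counter $N\in\Z$ equal to the signed count of deleted $z$'s; following the paper's convention, $N$ is stored in unary as a string of $\pm 1$'s so that each update is constant time. Second, I run the linear-time Turing-machine implementation of Dehn's algorithm for $F$ (as in \cite{Holt:real-time}) on $w_F$; whenever it performs a replacement $u_i\mapsto v_i^{-1}$, I also update $N\gets N+n_i$. If $w_F$ reduces to the empty word I output $N$, and otherwise declare $w\notin Z$.

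Correctness follows from the centrality of $z$: in the first phase each $z$ can be pulled to the front at no cost, and in the second phase the identity $u_i=z^{n_i}v_i^{-1}$ in $G$ together with centrality means that replacing an occurrence of $u_i$ by $v_i^{-1}$ inside any word changes the element it represents by a factor of $z^{-n_i}$, which is exactly compensated by $N\gets N+n_i$. For the complexity, $\abs{N}$ is bounded by the initial count of $z$'s plus a constant times the number of Dehn reductions, both of which are $O(n)$, so writing down the answer also takes linear time. The main technical point, which I view as the principal obstacle, is to verify that Holt's linear-time implementation of Dehn's algorithm on a Turing machine is flexible enough to carry the side-computation updating $N$ without losing its real-time character: each of its atomic reduction steps must be augmented with a constant amount of bookkeeping on the tape storing $N$, which seems straightforward but requires choosing the tape layout carefully so that the head positions remain synchronised.
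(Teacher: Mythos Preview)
Your argument is correct and follows essentially the same approach as the paper: strip out the central generators while tracking a counter, then run Dehn's algorithm on the hyperbolic quotient and update the counter by a bounded amount at each reduction. The paper phrases the bookkeeping via an explicit cocycle identity (showing $n_{abc}=n_{ab'c}+n_b-n_{b'}$), which is exactly your centrality argument in coordinates, and it also spells out the two-tape Turing-machine implementation of Dehn's algorithm that you flag as the principal obstacle, confirming that the side-update of $N$ costs only constant time per reduction.
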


\begin{proof}
    We can take $G$ to be, as a set, $H\times \mathbb Z$, with the operation $(h,n)(h',m)=(hh',n+m+\sigma(h,h'))$ for some cocycle $\sigma:H\times H\to \mathbb Z$.
    We can also fix a generating set $T$ for $G$ consisting of $(e,\pm1)$, where $e$ is the identity of $H$, and $S\times\{0\}$ for a finite symmetric generating set $S$ for $H$. We denote by $\pi$ the map from words in $T$ to words in $S\cup\{e\}$ coming from projection on the first coordinate, and with a slight abuse of notation we will sometimes conflate $\pi(w)$ with the element of $H$ that it represents. Given a word $w$ in $T$, there is $n_w\in \mathbb Z$ such that $w$ represents $(\pi(w),n_w)$ in $G$. Finally, let $\delta\geq 1$ be a hyperbolicity constant for the Cayley graph of $H$ with respect to $S$, say with $\delta$ an integer.

    It is well-known that, given a hyperbolic group, Dehn's algorithm can be run in linear time on a Turing machine, and in particular the following algorithm can also be implemented on a Turing machine in linear time in the length of the input word; we explain this in more detail below.
    
    \begin{enumerate}
        \item Start with an input word $w$ in $T$, and let $w_0$ be the word obtained removing from $w$ all letters of the form $(e,\pm 1)$. Let $n_0$ be the sum of the $\pm 1$ appearing in the second coordinates of such letters. Note that the element of $G$ represented by $w$ is $(\pi(w_0),n_{w_0}+n_0)$.
        
        At each step we will have a word $w_i$ in $S\times\{0\}$ and an integer $n_i$ such that $(\pi(w_i),n_{w_i}+n_i)$ represents the same element of $G$ as $w$.
        
        \item At each step $i$, exactly one of the following occurs.
        \begin{enumerate}[label=(\alph*)]
            \item $w_i$ is empty. In this case $w$ represents an element of $\mathbb Z$, the element being $n_i$ since $n_{\emptyset}=0$, and the algorithm stops.
            \item the previous case does not occur, and the word $\pi(w_i)$ is a $9\delta$-local geodesic (meaning that all subwords of length at most $9\delta$ represents geodesics in the fixed Cayley graph of $H$). In this case it is well known (see e.g. \cite[Theorem III.H.1.13]{BridsonHaefliger}) that by hyperbolicity $\pi(w_i)$ cannot represent a loop in the Cayley graph of $H$, that is, $\pi(w_i)\neq e$, and $w$ does not represent an element of $\mathbb Z$. The algorithm stops.
            \item the word $\pi(w_i)$ is not a $9\delta$-local geodesic, so that $w_i=abc$ where $b$ is a word of length at most $9\delta$ such that the word $\pi(b)$ is not geodesic. Let $b'$ be a word in $S\times \{0\}$ such that $\pi(b')$ is a geodesic word representing the same element of $H$ as $b$ (in particular $b'$ is shorter than $b$). We can set $w_{i+1}=ab'c$ and $n_{i+1}=n_i+n_{b}-n_{b'}$. Adding $n_{b}-n_{b'}$ at the various steps does not change the running time as this quantity is uniformly bounded and there are linearly many steps. We verify below that  $(\pi(w_{i+1}),n_{w_{i+1}}+n_{i+1})$ represents the same element of $G$ as $w$.
        \end{enumerate} 
    \end{enumerate}
    
In the notation of the last case, we are only left to show that $(\pi(abc),n_{abc}+n_i)=(\pi(ab'c),n_{ab'c}+n_i+n_{b}-n_{b'})$, that is, that $n_{abc}=n_{ab'c}+n_{b}-n_{b'}$.
Using the definition of the operation in $G$, we then have that the element of $G$ represented by $abc$ is
$$(\pi(abc),n_a+n_b+n_c+\sigma(\pi(a),\pi(b))+\sigma(\pi(ab),\pi(c))),$$
so that the second coordinate is $n_{abc}$.
Similarly, the element of $G$ represented by $ab'c$ is
$$(\pi(ab'c),n_a+n_{b'}+n_c+\sigma(\pi(a),\pi(b'))+\sigma(\pi(ab'),\pi(c))).$$
But in $H$ we have $\pi(b)=\pi(b')$ and $\pi(ab)=\pi(ab')$, so that the expressions for $n_{abc}$ and $n_{ab'c}$ have the same terms except for $n_{b}$ and $n_{b'}$, yielding required equality.

As promised, we explain in more detail how to implement the algorithm in linear time on a Turing machine with three bands, where the third band stores the $n_i$, and we disregard it below as, explained above, it is straightforward to update it at each step. Specifically, we have to implement the algorithm in a way that step 2(c) is performed at most linearly many times, and a bounded number of operations is performed when passing from $w_i$ to $w_{i+1}$. To ensure this, we inductively arrange the following. The word $w_i$ has length at most $n$ and it has an initial subword $a_i$ stored in one band, while the rest of the word is stored on a different band, and the two heads are respectively at the end and beginning of the stored words. The word $a_i$ is a $9\delta$-local geodesic, and $ 9\delta|w_i|-|a_i|\leq 9\delta n-i$. 

If $a_i=w_i$ either 2(a) or 2(b) applies, and indeed we performed a linear number of operations. If not, we can read the subword $u$ of $w_i$ consisting of the last $\min\{|a_i|,9\delta-1\}$ letters of $a_i$ and the letter of $w$ coming after $a_i$. If $u$ is a geodesic, then the subword $a_{i+1}$ of $w_i$ obtained by adding a letter at the end of $a_i$ is a $9\delta$-local geodesic, and the relevant head is at the end of it, so we can set $w_{i+1}=w_i$. When replacing $i$ with $i+1$, the quantity $9\delta|w_i|-|a_i|$ decreases by 1, so the induction hypothesis holds. If $u$ is not a geodesic, we can replace $w_i$ with $w_{i+1}$ as in 2(c) with $b = u$ and $a$ as the subword of $w$ coming before $u$, and where $b'$ gets stored in the second band. We then move the relevant head of the Turing machine to the end of $a$, which we set equal to $a_{i+1}$. When replacing $i$ with $i+1$, the quantity $|a_i|$ decreases by at most $9\delta-1$, while the quantity $9\delta|w_i|$ decreases by at least $9\delta$, so overall $9\delta|w_i|-|a_i|$ decreases by at least 1, and the induction hypothesis holds.
\end{proof}

Finally, we will need that the membership problem for quasiconvex subgroups of hyperbolic groups can be solved in linear time. This is well-known, but we include a proof to explicitly point out that one can further require the output to include a quasi-geodesic representative. The proposition can also be extracted with some work from \cite{KMW}, but here we explain a more direct approach

\begin{prop}
\label{prop:quasi_convex}
    Let $H$ be a quasiconvex subgroup of a hyperbolic group $G$, with a fixed generating set $S$ containing a generating set $T$ for $G$. Then there exists a constant $\lambda\geq 1$ and an algorithm that runs in $O(n)$ that determines whether a word $w$ in $S$ represents an element of $H$, and if so it outputs a $(\lambda,\lambda)$-quasi-geodesic representative in the alphabet $T$ of the element of $H$ represented by $w$.
\end{prop}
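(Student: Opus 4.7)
The plan is to split the task into two stages: first produce a quasi-geodesic representative $w'$ of $w$ in the alphabet $T$, then decide in linear time whether $w'$ represents an element of $H$, and if so output $w'$.

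For the first stage, I would rewrite $w$ over $T$ by substituting each letter of $S\setminus T$ with its bounded-length expression in $T$, which is linear time since $S$ is finite. Then I would run, on the resulting word, the linear-time Dehn-style reduction described in the proof of Proposition~\ref{prop:central_word_problem} (stripped of the central-extension bookkeeping), producing a $9\delta$-local geodesic $w'$ in the alphabet $T$ representing the same element of $G$, where $\delta$ is a hyperbolicity constant for $\mathrm{Cay}(G,T)$. Standard stability of local quasi-geodesics in $\delta$-hyperbolic spaces supplies a constant $\lambda_0=\lambda_0(\delta)$ such that every $9\delta$-local geodesic is a $(\lambda_0,\lambda_0)$-quasi-geodesic; this will be the $\lambda$ in the statement.

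For the second stage, I would invoke the well-known fact that a quasiconvex subgroup of a hyperbolic group is \emph{rational}: the set of $9\delta$-local geodesic words in $T$ representing elements of $H$ is a regular language. A deterministic finite state automaton for this language, run letter by letter along $w'$, decides membership of $w'$ in $H$ in linear time on a Turing machine. The automaton is built by tracking, for each prefix read so far, the closest-point projection to $H$ of its endpoint up to a bounded equivalence; quasiconvexity together with hyperbolicity ensure that along a local geodesic this data takes only finitely many values, yielding a finite state space. If the automaton accepts, we output $w'$.

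The main obstacle is justifying rigorously that the rational-subgroup automaton applies to $9\delta$-local geodesics and not only to genuine geodesics, because the reduction of the first stage outputs only a local geodesic. I would handle this by enlarging the state space using the Morse lemma: $w'$ stays uniformly close to a true geodesic with the same endpoints, so the cone-type/closest-point-projection bookkeeping remains finite. Alternatively, one could insert a second linear-time Dehn pass with larger lookahead that replaces $w'$ by a genuine geodesic representative before running the rational-subgroup automaton. The remaining implementation details on a Turing machine are routine given the linear-time Dehn reduction from Proposition~\ref{prop:central_word_problem} and standard Turing-machine simulation of finite state automata.
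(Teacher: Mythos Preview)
Your approach is essentially the paper's: Dehn-reduce to a quasi-geodesic, then run a finite-state automaton whose states record the bounded offset from $H$ (finite by the Morse lemma, which is exactly how the paper handles your ``main obstacle''). The paper builds this automaton explicitly, with states the elements $x$ of the $M$-ball in $G$ having $1$ as a closest point of $H$.

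The one genuine difference is how the output $T$-word is produced. You rewrite over $T$ first, Dehn-reduce in $T$, and output the reduced word itself. The paper instead Dehn-reduces in $S$ and has the automaton emit, at each transition, a short geodesic $T$-word for the $H$-increment $h(x_{i-1},s_i)$; the output is the concatenation of these. Your version is a bit cleaner, but it relies on $T$ generating all of $G$ so that the initial rewriting step makes sense. The paper's version only needs $T$ to generate $H$, and this is what is actually required in the application (see the proof of Lemma~\ref{lemma:there-exists-alg-edge-group}, where $T$ consists of generators of the cyclic subgroup $\langle a_\alpha\rangle$). The statement as written says $T$ generates $G$, so your reading is defensible, but be aware that this appears to be a typo and the intended hypothesis is that $T$ generates $H$; under that hypothesis your output step would need to be replaced by something like the paper's increment-emission.
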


\begin{proof}
    Fix a finite symmetric generating set $S$ for $G$, which we regard as endowed with the corresponding word metric. It is well-known, via Dehn's algorithm, that there exists $\lambda\geq 1$ and an algorithm that given as input a word of length $n$ in $S$ returns in time $O(n)$ another word $w'$ in $S$ representing the same element of $G$, with the word $w'$ being $(\lambda,\lambda)$-quasi-geodesic. Therefore, it suffices to describe an algorithm as in the statement except that the input word is required to be $(\lambda,\lambda)$-quasi-geodesic. For short, we call such word a $\lambda$-word. We now show that there exists a finite-state automaton that accepts a $\lambda$-word $w$ if and only if $w$ represents an element of $H$, which yields the required linear-time algorithm.

    There exists a constant $M\geq 0$ such that any $(\lambda,\lambda)$-quasi-geodesic with endpoints on $H$ stays within $M$ of $H$. The state of the finite-state automaton are elements $x$ of the ball of radius $M$ in $G$ such that $1$ is a closest element of $H$ to $x$. The initial state, and only final state, is the identity $1$. Consider now any $x$ as above and some $s\in S$ such that $xs$ still lies in the $M$-neighborhood of $H$. Let $h=h(x,s)$ be closest to $xs$ in $H$. We now connect $x$ to $h^{-1}xs$ (as states of the finite-state automaton). 

Now, if a $\lambda$-word represents an element of $H$ then it stays in the $M$-neighborhood of $H$, which in turn implies that it gives a path in the finite-state automaton. It is then readily seen that such path ends at the final state.

Conversely, if a $\lambda$-word $s_1\dots s_n$ ends at the final state, then it represents the element $\prod_{i=1}^n h(x_{i-1},s_i)\in H$ where $x_0=1$ and $x_{i}=h(x_{i-1},s_i)^{-1}x_{i-1}s_i$. Indeed, it is straightforward to prove by induction that $s_1\dots s_j=(\prod_{i=1}^j h(x_{i-1},s_i))x_j$.

The finite-state automaton can be implemented on a Turing machine, and we can add an additional band that writes as output a concatenation of geodesic words in $T$ representing the $h(x_{i-1},s_i)$; up to increasing $\lambda$ this is a $(\lambda,\lambda)$--quasi-geodesic word.
\end{proof}

We are now ready to prove our main theorem on the word problem for 3-manifold groups, assuming Theorem \ref{thm:admissible} and Proposition~\ref{prop:free_product}. The proof of the latter, in particular Theorem~\ref{thm:admissible}, is the focus of the rest of this paper.

\begin{proof}[Proof of Theorem \ref{thm:3_mflds}]
    We will use the geometrisation theorem. First of all, $M$ is a connected sum of prime 3-manifolds $M_i$, so that $\pi_1(M)$ is isomorphic to the free product of the $\pi_1(M_i)$. In view of Proposition \ref{prop:free_product}, it suffices to show that if $M$ is prime then the word problem can be solved in $O(n\log^2(n))$.

    If $M$ is prime it can be either geometric or non-geometric. If it is geometric, then it has one of the 8 geometries below, and in each case we explain why the word problem is solvable in $O(n\log^2(n))$. See \cite{Scott} for more information on the geometries.

    \begin{itemize}
        \item $S^3$. In this case $\pi_1(M)$ is finite.
\item $\mathbb R^3$. In this case $\pi_1(M)$ is virtually abelian. The word problem is solvable in linear time in abelian groups, and the word problem in a finite-index subgroup has the same complexity as the word problem in the ambient group as argued in Lemma \ref{lem:finite_index}.
\item $\mathbb H^3$. In this case $\pi_1(M)$ is hyperbolic and hence has word problem solvable in linear time, see e.g. \cite[Theorem 2.18]{notes}.
\item $S^2\times \mathbb R$. In this case $\pi_1(M)$ is virtually cyclic, and in particular virtually abelian.
\item $\mathbb H^2\times \mathbb R$. In this case $\pi_1(M)$ is a $\mathbb Z$-central extension of a hyperbolic group, so we can use Proposition \ref{prop:central_word_problem} to find a linear-time solution to the word problem.
\item $\widetilde{PSL}_2(\mathbb R)$. In this case $\pi_1(M)$ is again a $\mathbb Z$-central extension of a hyperbolic group.
\item $Nil$. Nilpotent groups have word problem which is solvable even in real-time \cite{real_time_wp}.
\item $Sol.$ For this case, see \cite[Proposition 2]{average_case}; the word problem is solvable in $O(n\log^2 n)$.
    \end{itemize}

If $M$ is non-geometric, then it is either a graph manifold or it contains a hyperbolic component in its geometric decomposition. In the first case, $\pi_1(M)$ is the fundamental group of an admissible graph of groups, so we can use Theorem \ref{thm:admissible}. In the latter case, according to \cite{mixed_virt_spec} $\pi_1(M)$ virtually embeds in a right-angled Artin group. By Lemma \ref{lem:finite_index} the ``virtually'' does not affect the complexity of the word problem, and \cite{wp_raags,wp_monoids} provide a linear-time solution to the word problem in right-angled Artin groups; the algorithm is also explained in \cite[pages 443-444]{conj_raags}. Both papers use the RAM model of computation, but it is readily seen that the algorithm can be implemented in linear time also on a Turing machine (with one tape for each generator keeping track of the stack of the piling which corresponds to it).

This was the last case, and the proof is complete.
\end{proof}

\section{A perspective on solving the word problem for free products}\label{sec:free_prod}

In this section, we aim to highlight some difficulties about solving the word problem for free products and give some intuition on what an efficient solution of the word problem for free products (and, by extension, for more general graph of groups like admissible graph of groups) should look like. In particular, we want to highlight why the ``straightforward'' approaches are slow. We then use the gained knowledge to describe what we want from an efficient algorithm. This gives an intuition why the middle derivations (which is the key tool in our algorithm for free products and admissible graph of groups) as described in the next sections are defined the way they are. 

In the following, $G = A\ast B$ is a free product and we assume that the word problem in $A$ and $B$ can be solved in time at most $f(n)$ time for a convex function $f(n)\geq n$. Further, $S_A, S_B$ are symmetric generating sets of $A$ and $B$ respectively which do not contain the identity. In particular, any (nontrivial) word $w$ over $S = S_A\cup S_B$ can be written as $w = a_1b_1\ldots a_kb_k$ where the $a_i$ are words over $S_A$ (and nonempty for $i\neq 1$) and the $b_i$ are words over $S_B$ (and nonempty for $i\neq k$). We call the $a_i$ and $b_i$ \emph{syllables} and say that the $a_i$ are of ``type $A$'' and the $b_i$ are of ``type $B$''. Note that since $G$ is a free product, such a partition is in fact unique. 

\begin{rem}\label{rem:triviality}
    If all words $a_i$ and $b_i$ (except potentially $a_1$ and $b_k$, but not both if $k = 1$) do not represent the trivial element, then $w$ does not represent the trivial element.
\end{rem}

In light of Remark \ref{rem:triviality}, it makes sense to figure out which of the syllables are trivial and then remove them from the word. It turns out that the order in which we do this has a great influence on the runtime of the algorithm.

\subsection{Many iterations}

One way to check whether a word $w$ is trivial is the following. We call this the $\emph{many iterations}$ way.

\begin{enumerate}
    \item Write your word $w$ as $a_1b_1\ldots a_kb_k$ as described above.
    \item Go through the syllables $a_i$ and $b_i$ from left to right and check whether they are trivial. Since we assumed we can solve the word problem in each factor in time at most $f(n)$, this will take at most $f(\abs{a_i})$ (resp. $f(\abs{b_i})$) time. If it turns out that the current syllable does not represent the trivial word, append it to a new (originally trivial) word $w'$. 
    \item Check whether $w'$ is empty. If it is empty, then $w'$ (an hence $w$) represents the trivial word. If $w'$ has the same number of syllables as $w$, then $w$ is not the trivial word (see Remark \ref{rem:triviality}). If neither of the above happened, redefine $w$ as $w'$ and $w'$ as the trivial word and go back to Step 1.
\end{enumerate}

\begin{ex}
    Take $A = \langle a \rangle$ and $B = \langle b \rangle$ and let us examine what the many iterations algorithm does for the word $w = a b b^{-1} a b$. 

    We start by writing $w = a_1b_1a_2b_2$, where 
    \begin{align*}
        a_1 = a, b_1 = bb^{-1}, a_2 = a, b_2 = b
    \end{align*}

    The only trivial syllable is $b_1$, so after the first iteration, we will have $w' = a_1a_2b_2 = aab$. We then go back to Step 1, and write $aab = a_1b_1$, where $a_1 = aa$ and $b_1 = b$. We see that neither of the syllables are trivial, so at the end $w'$ will have the same amount of syllables as (the current word) $w$. Hence we know that the word is not trivial and we stop. 
\end{ex}

\begin{ex}\label{ex:slow-many}
    Next we will examine an example where the many iterations algorithm takes $\succeq n^2$ time. Again, we take $A = \langle a \rangle$ and $B = \langle b \rangle$. Consider
    \begin{align*}
        w = \underbrace{ab\ldots ab}_{2k} \underbrace{b^{-1}a^{-1}\ldots b^{-1}a^{-1}}_{2k}
    \end{align*}
    and let $n = \abs{w} = 4k$. Note that $w = a_1b_1\ldots a_{2k}b_{2k}$, where 
    \begin{align}
        a_i = \begin{cases}
            a &\text{if $i\leq k$,}\\
            a^{-1} & \text{otherwise,}\\
        \end{cases}
        \qquad b_i = \begin{cases}
            b & \text{if $i<k$,}\\
            bb^{-1} & \text{if i = k,}\\
            b^{-1} & \text{if $k<i<2k$,}\\
            \text{trivial} & \text{if i = 2k.}\\
        \end{cases}
    \end{align}
    In particular, the only syllables representing the trivial word are $b_k$ (and $b_{2k}$). Hence when we arrive at Step 3 the first time, we have that 
    \begin{align*}
        w' = \underbrace{ab\ldots a}_{2k-1} \underbrace{a^{-1}\ldots b^{-1}a^{-1}}_{2k-1}.
    \end{align*}
    Since we had to check every syllable, the first iteration of Steps 1 to 3 took $\succeq n f(1)\succeq n$ time. For the next iteration, we again only have two syllables (namely $a_{k-1}$ and $b_{2k-1}$) which are trivial, so when we reach Step 3 we have
    \begin{align*}
        w' = \underbrace{ab\ldots ab}_{2k-1} \underbrace{b^{-1}a^{-1}\ldots b^{-1}a^{-1}}_{2k-1} 
    \end{align*}
    and performing Step 2 took $\succeq n - 2$ time. In total, it will take us $2k$ iterations until we determine that the starting word was representing the trivial element and a total of $\succeq n + (n-2) +\ldots 4 + 2 \succeq n^2$ time.
\end{ex}

The above example shows that the many iterations algorithm is too slow. One takeaway from the algorithm is the following: when we discovered that $b_k$ was trivial, instead of checking whether $a_{k+1}$ is trivial, we should have checked whether $a_{k}a_{k+1}$ is trivial. When we realized that it was trivial, we should have checked $b_{k-1}b_{k+1}$ and so on. The idea to check syllables in that order is described in the next section.

\subsection{One iteration}

We now incorporate the idea described in the paragraph above in an algorithm we call \emph{one iteration}. For this, $w'$ will be an auxiliary word of the form $w' = c_1'\ldots c_\ell'$, where the $c_i'$ are the nonempty syllables of $w'$. The algorithm works as follows.

\begin{enumerate}
    \item Write $w = a_1b_1\ldots a_kb_k = c_1\ldots c_{2k}$, where $c_{2i-1} = a_i$  and $c_{2i} = b_i$. Start with $w'$ as the empty word.
    \item Go through the syllables $c_i$ from left to right.
    \item If the last syllable of $w'$ is of the same type (this requires $w'$ to not be the trivial word) as $c_i$, then do the following.
    \begin{itemize}
        \item Let $c_\ell'$ be the last (nonempty) syllable of $w'$. Check whether $c_\ell'c_i$ represents the trivial word. If so, remove $c_\ell$ from $w'$. If not, replace $c_\ell'$ with the syllable $c_\ell'c_i$.
    \end{itemize}
    \item If $w'$ is empty or the last syllable of $w'$ is not the same type as $c_i$, then do the following. 
    \begin{itemize}
        \item Check whether $c_i$ represents the trivial word. If not, add the syllable $c_i$ to $w'$. 
    \end{itemize}
\end{enumerate}

Note that at the end, all syllables of $w'$ are nonempty by design, so we know that $w$ represents the trivial word if and only if $w'$ is empty. Thus, already after ``one iteration'' we know whether $w$ represents the trivial word.

\begin{ex}
    Consider again the word from Example \ref{ex:slow-many}. That is $A = \langle a\rangle$, $B = \langle b \rangle$ and 
    
    \begin{align}
        w = \underbrace{ab\ldots ab}_{2k} \underbrace{b^{-1}a^{-1}\ldots b^{-1}a^{-1}}_{2k}.
    \end{align}

    We start with $w'$ as the empty word. Then for $i=1$, we see that $w'$ is empty, so we check whether $a_1 = a$ represents the trivial word, which it does not. Thus we add $a$ to $w'$. Then for $i=2$ we check whether the last syllable of $w'$ (which is $a$) has the same type as $c_2 = b_1 = b$, which it does not. Hence we check whether $c_2$ represents the trivial word (which it does not) and we add $b$ to $w'$. After $2k-1$ steps, we have that 
    \begin{align*}
        w' = \underbrace{aba..\ldots a}_{2k-1}.
    \end{align*}
    Now we consider $c_{2k} = bb^{-1}$. It represents the trivial word (and has a different type then the last syllable of $w'$). Hence we do not change $w'$ and go to the next step. There we see that $c_{2k+1} = a^{-1}$ has the same type as the last syllable of $w'$. Further $aa^{-1}$ represents the trivial word, so we delete $a$ from $w'$. At the end we have that $w'$ is the trivial word and hence we know that $w$ represents the trivial word. In total, this will run in time at most $ n $.
\end{ex}

The example above gives hope that the ``one iteration'' algorithm has a good runtime. However, the next example shows that this is not the case. 

\begin{ex}
    Consider $B = \langle b \rangle$, $A$ a group whose word problem is solvable in time $f$ and let $a_1 ,\ldots, a_k$ be words in $A$ such that 
    \begin{itemize}
        \item $\abs{a_1}= k$, 
        \item $\abs{a_i} = 1$ for $2\leq i \leq k$,
        \item For all $i$, $\prod_{j=1}^i a_j$ does not represent the trivial word.
    \end{itemize}
    Consider $w = a_1 bb^{-1} a_2 bb^{-1}a_3 \ldots bb^{-1}a_k$. We have that $n = \abs{w} = 4k-3$. When we perform the algorithm, after the first step we have $w' = a_1$ and we have to check whether $a_1$ is trivial, which takes up to $f(k)$ time. In the third step we have to check whether $a_1a_2$ is trivial, which takes up to $f(k)$ time. Since it is not trivial, we have afterwards that $w' = a_1a_2$. Continuing this, we see that for odd $i$, the $i$-th step takes up to $f(k)$ time. Thus in total, the algorithm takes up to $kf(k)\succeq nf(n)$ time.
\end{ex}

The example above shows that the ``one iteration'' algorithm is slow as well. 

\subsection{Analysing the reasons for the slow runtime}

The ``one iteration'' algorithm is slow because it (potentially) checks over and over again whether a certain subword of $w'$ plus a small subword of $w$ is the trivial word. If the last syllable of $w'$ is long compared to the added syllables, then we rack up a lot of runtime. On the other hand, if we only check the added syllables and ignore $w'$ (as is done in ``many iterations'') then we might have to do a linear amount of iterations until we reach the trivial word (as in Example \ref{ex:slow-many}), which again blows up a runtime. Thus, to get an efficient runtime, we need to strike a balance. On one hand, we want to guarantee that there are not too many iterations (say at most $\log (n)$) and on the other hand, we want to not check long syllables in $w'$ too many times, especially when the current syllable from $w$ is short. 

\section{Solving the word problem for admissible graph of groups}
\label{sec:admissible}

In this section we solve the word problem for admissible graphs of groups, as well as free products.

\par\smallskip

\textbf{Setup:} In this section $\mc G$ denotes a graph of groups with underlying graph $\Gamma$.

\subsection{Organized words and derivations}

In this subsection we discuss reductions for words representing elements of $\pi_1(\mathcal G)$, and, most importantly, we introduce derivations of words. These are words that represent the same group elements of $\pi_1(\mathcal G)$ as a given word and will play a crucial role.

Fix a base vertex $v_0\in V(\Gamma)$. We assume that $S$ is a generating set of $G = \pi_1(\mc G,v_0)=\pi_1(\mc G)$ satisfying 
\begin{align*}
    S =( \cup_{v\in V(\Gamma)} S_{v})\cup (\cup_{\alpha\in E(\Gamma)} S_\alpha )\cup E(\Gamma) \cup \{\alpha_\ast\},
\end{align*}
where $S_{v}$ and $S_{\alpha}$ are symmetric generating sets of $G_v$ and $\tau_\alpha (G_\alpha)$ respectively. Further, if $\mc G$ is admissible, we require that $S_{\alpha}\subset S_{\alpha^+}$ contains $o_{\alpha}, c_{\alpha}$ and $H_{\alpha}$ as in Lemma~\ref{lem:properties_of_vertex_groups}. The generator $\alpha_\ast$ is a dummy generator which we require to satisfy $\alpha_\ast =_G 1$ and should be thought of as an additional edge from $v_0$ to $v_0$ with formal inverse $\bar{\alpha}_\ast$ and which satisfies $\alpha_\ast^+ = \alpha_\ast^- = v_0$.

\begin{defn}[organized]
    We say a word $w$ is \emph{organized} if $w = t_0w_1t_1w_2t_2\ldots w_nt_n$ for some $n\geq 1$ and the following hold.
    \begin{enumerate}
        \item ($t_i$ are edges) $t_i\in E(\Gamma)$ for all $1\leq i < n$.\label{cond:t_i_are_edges}
        \item (corners are special)\label{cond:extra_edges}
        \begin{enumerate}
            \item $t_0 =  \alpha_\ast$\label{cond:extra_edges0}
            \item $t_n = \alpha_\ast$\label{cond:extra_edgesn}
        \end{enumerate}
        \item (path in $\Gamma$) $t_{i-1}^+ = t_{i}^-$ for all $1\leq i \leq n$\label{cond:bulid_path}
        \item $w_i$ is a (possibly empty) word in $S_{t_{i}^-} = S_{t_{i-1}^+}$.\label{cond:vertex_groups}
    \end{enumerate}
    We say that $n$ is the \emph{path-length} of $w$ and denote it by $\pabs{w}$. We call the words $u_i(w):=w_i$ \emph{pieces} of $w$. Given a piece $u_i(w)$ of $w$ we say that $t_{i-1}$ is its \emph{left edge} and $t_i$ is its \emph{right edge}. It will be useful to also introduce the notation $t_i(w):=t_i$.

\end{defn}
The Conditions \eqref{cond:t_i_are_edges}-\eqref{cond:bulid_path} imply that $p = (t_1,t_2,\ldots,t_{n-1})$ is an edge path in $\Gamma$ starting and ending at $v_0$. We should think of $w$ as follows: while going along the path $p$, we travel along words $w_i$ which are in the current vertex group. 

A prefix $w'$ of a organized word $w$ is called \emph{semi-organized}. We denote the last edge (piece) of a semi-organized word $w'$ by $t_{last}(w')$ ($u_{last}(w')$). Note that the last letter of a semi-organized word does not have to be an edge, that is, it does not have to be in $E(\Gamma)\cup \{\alpha_\ast\}$. If the last letter of $w'$ is an edge then we say that $w'$ is \emph{full}. Observe that full semi-organized words satisfy all conditions for organized words except \eqref{cond:extra_edgesn}.

\begin{defn}
    Let $w$ be an organized word. We say that a piece $u_i(w)$ is \emph{$w$--reducible} if $t_{i-1}(w) = \bar{t}_i(w)$ and $u_i(w)\in \tau_{t_{i-1}}(G_{t_{i-1}})$. If a piece is not $w$--reducible we call it \emph{$w$--irreducible}.
\end{defn}

\begin{defn}
    We say that an organized word $w$ is \emph{shortened} if all its pieces are $w$--irreducible. (This is often called reduced, but we chose a term less similar to ``reducible''.)
\end{defn}

The following is a basic, well-known result about graphs of groups which says, roughly, that a word represents the trivial element if and only if it ``visually'' does so.

\begin{lem}\label{lemma:tirivality-cirteria-shortened}
   If $w$ is a shortened word, then $w$ represents the trivial element if and only if $\pabs{w} = 1$ and $u_1(w)$ represents the trivial word in $G_{v_0}$.
\end{lem}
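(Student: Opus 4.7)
The plan is to apply the normal form theorem for fundamental groups of graphs of groups, most cleanly phrased via the action of $G = \pi_1(\mathcal G)$ on its Bass-Serre tree $T$ with a fixed lift $\tilde v_0 \in T$ of the basepoint $v_0$. The ``if'' direction is immediate: if $\pabs{w} = 1$, then $w = \alpha_\ast u_1(w) \alpha_\ast$, and since $\alpha_\ast =_G 1$ by construction, $w$ and $u_1(w)$ represent the same element of $G$, so $w$ is trivial in $G$ iff $u_1(w)$ is trivial in $G_{v_0}$.

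For the ``only if'' direction, I would associate to a normal word $w = t_0 u_1 t_1 \ldots u_n t_n$ a combinatorial path $\gamma_w$ in $T$ built inductively from $\tilde v_0$ by alternately applying the $u_i$---which stabilize the current vertex since $u_i \in G_{t_{i-1}^+}$---and traversing the edge of $T$ labelled by $t_i$. The dummy letters $t_0 = t_n = \alpha_\ast$ contribute no edge traversal, so $\gamma_w$ has combinatorial length $n-1$ and terminates at $w \cdot \tilde v_0$. The key step is to check that the shortened hypothesis makes $\gamma_w$ backtrack-free: backtracking at an interior position $i$ occurs exactly when $t_i = \bar{t}_{i-1}$ \emph{and} $u_i$ fixes the edge just traversed, equivalently $u_i \in \tau_{t_{i-1}}(G_{t_{i-1}})$, which is precisely the condition that $u_i$ is $w$-reducible. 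The boundary pieces $u_1$ and $u_n$ can never be $w$-reducible because $\alpha_\ast$ is not the formal inverse of any real edge, so the shortened hypothesis is exactly what is needed. Since $T$ is a tree, a non-empty backtrack-free path cannot return to its start, so for $\pabs{w} = n \geq 2$ we get $w \cdot \tilde v_0 \neq \tilde v_0$ and hence $w \neq 1$. This forces $\pabs{w} = 1$, and then the ``if'' direction's observation gives that $u_1(w)$ is trivial in $G_{v_0}$.

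The main (mild) obstacle will be the bookkeeping: carefully matching the $w$-reducibility condition to the combinatorial notion of backtracking in $T$, in particular identifying the stabilizer of each traversed edge with the appropriate conjugate of $\tau_{t_{i-1}}(G_{t_{i-1}})$, and checking that the dummy letters $\alpha_\ast$ do not cause edge cases at the endpoints. An alternative route would be to ``erase'' the $\alpha_\ast$'s and cite Britton's lemma for graphs of groups directly.
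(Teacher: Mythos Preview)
Your proposal is correct. The paper does not actually prove this lemma; it merely introduces it as ``a basic, well-known result about graphs of groups'' and states it without argument. Your approach via the Bass--Serre tree (equivalently, Britton's lemma for graphs of groups) is precisely the standard justification and is carried out correctly: the identification of $w$-reducibility of a piece with backtracking of the associated edge path in $T$ is exactly right, and your observation that the boundary pieces $u_1$ and $u_n$ are automatically irreducible (since $\alpha_\ast$ and $\bar\alpha_\ast$ are distinct from every real edge and from each other) handles the edge cases cleanly. There is nothing further to compare.
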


Finally, we introduce derivations of words, which, roughly, are obtained from a given word by replacing subwords that represent elements of a vertex group by a word in the generating set of the vertex group.

\begin{defn}\label{def:derivation}
    We say that an organized word $w'$ of path-length $\pabs{w'} = n$ is \emph{derived} from an organized word $w$ of path-length $\pabs{w} = m$, in symbols $w'\prec w$, if there exists a sequence $k_0< k_1<k_2< \ldots < k_n$ such that $k_0 = 0, k_n = m$ and the following hold.
    \begin{enumerate}
        \item $t_i(w')= t_{k_i}(w)$ for all $0\leq i \leq n$\label{der1}
        \item $u_i(w') =_{G} \left( \prod_{j= k_{i-1}+1}^{k_i - 1}u_{j}(w) t_{j}(w)  \right)u_{k_{i}}(w) =: r_i$ for all $1\leq i \leq n$. \label{der2}
    \end{enumerate}
    We call the sequence $\mathfrak k = (k_0, \ldots, k_n)$ a split-sequence. We say that $u_i(w')$ is derived from $r_i$ and $r_i$ is the \emph{$(w, \mathfrak k)$--base} of $u_i(w')$. We say that the $(w, \mathfrak k)$--derived length of $u_i(w')$, denoted by $\norm{u_i(w')}_{w, \mathfrak k}$, is $\abs{r_i}+1$. 
    For any piece $u_j(w)$ with $k_{i-1}+1\leq j \leq k_i$ we say that $u_i(w')$ is its \emph{$(w', \mathfrak k)$--extension}.
\end{defn}

 Note that the derived length, base, and extensions all depend on the split-sequence $\mathfrak k$. Further, there might be multiple split-sequences $\mathfrak k$ which satisfy \ref{der1} and \ref{der2}. However, in the following section, it will be often clear which split sequence we refer to. Thus, by abuse of notation, when it is clear which split sequence is used for the derivation or it does not matter which one we use, then we will omit it from the notation. For example, we would write $\norm{u_i(w')}_{w}$ instead of $\norm{u_i(w')}_{w, \mathfrak k}$. In particular, if we also have a word $w''$ which is derived from $w'$ (with split-sequence $\mathfrak k' = (k_0', \ldots, k'_{\pabs{w''}})$ ), then, $w''$ is derived from $w$ and the canonical the associated split-sequence is $\mathfrak k'' = (k_0'', \ldots, k_{\pabs{w''}}'')$, where $k_i'' = k_{k_i'}$ for all $i$.

When we construct derivations they usually have the following elementary form. Given an organized word $w$ with $n=\pabs{w}$ and a piece $u_i(w)$ which is reducible, define
\[
w' = \left (\prod_{j=1}^{i-1}t_{j-1}(w)u_j(w)\right ) x \left(\prod_{j=i+1}^{n}u_j(w)t_j(w)\right),
\]
where $x$ is a word in $S_{t_{i-2}(w)^+}$ representing the same element as $t_{i-1}(w)u_i(w)t_{i}(w)$. For this, $w'$ is derived from $w$ and with the canonical split sequence $\mathfrak k = (k_0, \ldots, k_{n-1})$, where 
\[
k_j =
\begin{cases}
    j &\text{if $j< i$,}\\
    j+1 & \text{if $j\geq i$.}
\end{cases}
\]
Note that any word $w'$ derived from $w$ can be obtained by successively applying the above elementary derivations.

Furthermore, if we have $0\leq i \leq j \leq \pabs{w}$ and a word $u'$ in $S_{t_i^-(w)}$ which satisfies 
\[
u' =_G t_{i-1}(w)u_i(w)t_i(w)\ldots u_j(w)t_j(w),
\]
then we define the \emph{$(w, (i, j))$-derived length of $u'$}, as
\[
\norm{u'}_{w, (i+1, j)} =1 + \sum_{k=i}^j (\abs{u_k(w)}+1).
\]
Again, when it is clear what $i$ and $j$ are, then we omit $(i, j)$ from the notation and call it the $w$--derived length and denote it by $\norm{u'}_w$.

We now list some properties of derivations.

\begin{lem}[Properties of derivations]\label{lemma:basic-properties-derivations}
Using the notation from Definition~\ref{def:derivation}, the following properties hold. \begin{enumerate}
    \item (same element) $w'  = _G w$
    \item (length invariance) $\sum_{i=1}^{\pabs{w'}} \norm{u_i(w')}_{w} = \abs{w}-1$.\label{prop:length-invariance}
    \item (transitivity) If $w''$ is derived from $w'$, then $w''$ is derived from $w$.
    \item (unchanged pieces) If $k_i -1 = k_{i-1}$, then $u_i(w') =_G u_{k_i}(w), t_{i-1}(w') = t_{k_{i-1}}(w)$, and $t_i(w') = t_{k_i}(w)$. We call such a piece $u_i(w')$ \emph{unchanged}.
    \item (trivial derivations) If $w$ is shortened, then every piece $u_i(w')$ is unchanged. In particular, $w'$ is shortened. We call a derivation where every piece is unchanged a \emph{trivial derivation}.
\end{enumerate}    
\end{lem}
\begin{proof}
    The lemma follows directly from the definition of a derivation.
\end{proof}

Since we are interested in efficiently computing derivations from a starting word $w$, it becomes important to distinguish derivations which are ``short'' in the sense that their length is bounded by the length of our initial word $w$. A precise definition of when we consider a derivation to be ``short-enough'' is given below.

\begin{defn}\label{def:non-sprawling}
    Let $K\geq 1$ be a constant and let $w'$ be a word derived from an organized word $w$. We say that $w'$ is $(K, w)$--non-sprawling if for all $1\leq i \leq \pabs{w'}$,
    \begin{align}\label{ineq:non-spraw}
        \abs{u_i(w')}\leq K\norm{u_i(w')}_{w},
    \end{align}
    for all pieces $u_i(w')$ of $w'$. Moreover, if $a = t_0(w')u_1(w') \ldots u_k(w')$ or $a =  t_0(w')u_1(w') \ldots u_k(w') t_k(w')$ is a prefix of $w'$, we say that $a$ is $(K, w)$--non-sprawling if \eqref{ineq:non-spraw} holds for all $1\leq i \leq k$.
\end{defn}

\begin{rem}\label{rem:quasi-implies-non-sprawling}
   Using the notation of Definition~\ref{def:non-sprawling}, if $u_i(w')$ is a $K$--quasi-geodesic in $G$ for all $1\leq i \leq \pabs{w'}$ (respectively for all $1\leq i \leq k$), then $w'$ (respectively $a$) is $(K, w)$--non-sprawling. Namely, by definition, $\norm{u_i(w')}_w - 1$ is the length of a word in $S$ representing the same element as $u_i(w')$. Hence, if $u_i(w')$ is a $K$--quasi-geodesic in $G$, then $\abs{u_i(w')}\leq K d + K \leq K\norm{u_i(w')}_w$, where $d$ is the length of a shortest word in $G$ representing the same element as $u_i(w')$.
\end{rem}

Being able to compute derivations which are non-sprawling will become important in the runtime analysis for our algorithms.

\subsection{Middle Derivations}\label{sec:middle-derivations}
Let $w$ be derived from the organized word $w^0$. We will describe a procedure which takes as input the pair $(w, w^0)$ and computes a word $w'\prec w$ which we will call a \emph{middle derivation} of the pair $(w, w^0)$. This is the crucial construction for our algorithms. As the definition of a middle derivation is somewhat technical, we first want to highlight the broad idea of what a middle derivation should be, see also Figure~\ref{fig:middle_deriv} for an illustration.


    \subsubsection*{Definition sketch of middle derivations.}
    \begin{figure}[h!]
    \centering
        \includegraphics[scale=0.9]{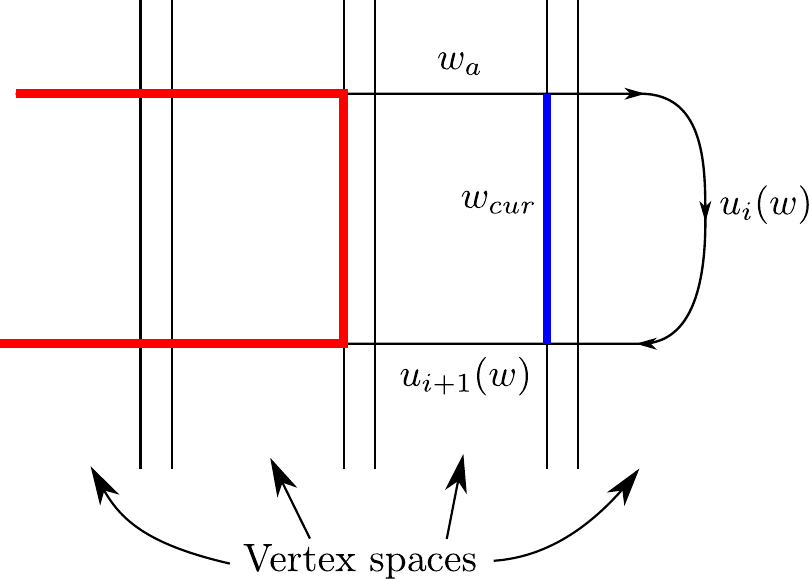}
        \caption{A middle derivation. Transitioning between any two given vertex spaces requires crossing an edge labelled by some $t\in E(\Gamma)$.}\label{fig:middle_deriv}
    \end{figure}
    
    The word $w$ gives a path $\theta$ in the Cayley graph of $\pi_1(\mathcal G)$ (with respect to our chosen generating set), which we think of as consisting of cosets of the vertex groups (henceforth, vertex spaces), and an edge labelled by some $t\in E(\Gamma)$ connects distinct vertex spaces. A piece of $w$ being reducible can be thought of as $\theta$ having  an ``excursion'' in a vertex space that starts and ends at the same coset of an edge group. We can perform a reduction replacing a subpath of $\theta$ with a path in a coset of an edge group contained in an adjacent vertex space; this is the content of Step \eqref{step:wcur} and Step \eqref{step:inner-reducibility-check} below, and the replacing path is labelled by $w_{cur}$ in the figure. This is the ``basic move'' of a middle derivation.

    In general, we check from left to right whether the current piece $u_i(w)$ is reducible, that is, we check whether we can perform a ``basic move'' as described above (this is done in Steps~\ref{step:1}, \ref{step:reducibility-check} and \ref{step:wcur}). If such a reduction is possible, instead of moving to the next piece $u_{i+1}(w)$, we check whether a further reduction around $u_i(w)$ is possible (that is, we check whether $u_{i-1}(w) w_{cur} u_{i+1}(w)$ is reducible\footnote{Since the left part of $w$ might have already been changed by the procedure, this is not quite correct. Actually, we instead have to check whether $w_aw_{cur} u_{i+1}(w)$ is reducible, where $w_a$ is the last piece of the possibly changed left part of $w$.}), see also Figure~\ref{fig:middle_deriv}. We repeat this until no more reduction around $u_i(w)$ is possible, \textbf{or} the sum of the (derived) lengths of the other paths involved exceeds the (derived) length of $u_i(w)$. This is what happens in Step~\ref{step:innerrep}. In other words, we continue checking whether we have a reduction that includes $u_i(w)$ until either there is no more of them, or the to be checked word has (derived) length at least twice that of $u_i(w)$. 

    While this might seem like an arbitrary cutoff, this is the crucial ingredient that will help us to show that not only can we compute middle derivations effectively (see Lemma~\ref{lemma:quick-middle-der-of-free-prod} and Lemma~\ref{lemma:compute_center_der_quickly}), but also, after logarithmically many successive middle derivations, there are no more pieces that are reducible (see Proposition~\ref{prop:log-derivation-is-shortenend}). 
    

\subsubsection*{Definition of a middle derivation.}
We now are ready to start the formal definition of a middle derivation.

\textbf{Notation:} Let $w$ be derived from an organized word $w^0$. Let $n = \pabs{w}$ be the path-length of $w$. For $1\leq i \leq n+1$ define $w_i = \Pi_{j=i}^n u_j(w)t_j(w)$. In other words, $w_i$ is a suffix of $w$ starting from the piece $u_i(w)$ and $w_{n+1}$ is the empty word. 

In the procedure we will start with defining $a = \alpha_\ast$ and $i=0$ and then update $a$ and $i$ such that (at most times) $a$ is semi-organized, $aw_i$ (or $aw_{i+1}$) is organized and $aw_i\prec w$ (or $aw_{i+1}\prec w$). We denote $t_{last}(a)$ and $u_{last}(a)$ by $t_a$ and $w_a$ respectively.

Throughout the procedure, we will add indices (or pairs of them) to the initially empty sets $I_{start}$, $I_{core}$, and $I_{red}$. These sets are only used for analysis purposes and do not affect the procedure. Further, we will have a variable $c$ which is a counter that affects how many times we repeat Step \ref{step:innerrep}. Note that the only thing we need $w^0$ for is to determine suitable values for $c$. Lastly, $w_{cur}$ will be a variable denoting a word which will usually be in an edge group.

\begin{enumerate}[label = \Alph*)]
    \item \textbf{Define $a := \alpha_\ast$ and $i:= 0$. }\gar{$aw_{i+1}  \prec w$,  $a$ is full, and $t_a = t_{i}(w)$.} \label{step:setup}
    \item \textbf{Repeat the following.} We call each repetition an \emph{AlgStep}. \gar{$aw_{i+1}  \prec w$,  $a$ is full, and $t_a = t_{i}(w)$.}\label{step:algrepeat}
\begin{enumerate}
    \item \textbf{Increase $i$ by one.} \gar{$aw_{new(i)}  \prec w$, $a$ is full, and $t_a = t_{new(i)-1}(w)$. } \label{step:1}
    \item  \textbf{Check whether $i = n+1$.}\label{step:endcheck} 
    \begin{itemize}
        \item \textbf{If $i = n+1$: Go to Step \ref{step:defofw'}} \gar{$aw_i = a  \prec w$.}
        \item \textbf{If $i\neq n+1:$} Define $i_\mathrm{start}$ as $i$ and add it to $I_{start}$. \textit{We call $i_{start}$ the instigator of the current repetition of AlgStep.}
    \end{itemize}
    \item \textbf{Check whether $u_i(w)$ is $w$--reducible.} Since $a$ is full and $t_a = t_{i-1}(w)$ (see the guarantee in Step~\ref{step:1}) we have that $u_i(w)$ is a piece of $aw_i$ and it is $aw_i$--reducible if and only if it is $w$--reducible. \label{step:reducibility-check}
        \begin{itemize}
            \item \textbf{ If $u_i(w)$ is $w$--irreducible: append $u_i(w)t_i(w)$ to $a$ and go to Step \ref{step:algrepeat}}. \gar{$new(a)w_{i+1}  \prec w$, $new(a)$ is full, and $t_{new(a)} = t_i(w)$.}
        \end{itemize}
    \item \textbf{Set $c$ to $\norm{u_i(w)}_{w^0}$} and add $i_{start}$ to $I_{core}$.\label{step:core-add}
    \item \textbf{Set $w_{cur}$ to a word over $S_{\bar{t}_a}$ which satisfies $w_{cur} = _G t_a u_i(w) t_a^{-1}$ and remove $t_a$ from $a$.} This can be done since $u_i(w)$ is  $old(a)w_i$--reducible. \gar{$new(a)new(w_{cur}) w_{i+1}\prec old(a) w_i\prec w$, $new(a)$ is not full.}\label{step:wcur}
    \item \textbf{Repeat the following}.  We call each repetition an InnerStep. \gar{$aw_{cur}w_{i+1}  \prec w$, $a$ is not full.}\label{step:innerrep}
    \begin{enumerate}[label = \roman*)]
        \item \textbf{Increase $i$.} \gar{$aw_{cur}w_{new(i)}\prec w$, $a$ is not full.} \label{step:inner-increase} 
        \item \textbf{Subtract $\norm{u_i(w)}_{w^0} + \norm{w_a}_{w^0}$ from $c$.}   \label{step:update_counter2}
        \item \textbf{Check whether $c\geq 0$.} \label{step:check-counter}
        \begin{itemize}
            \item \textbf{If $c < 0$: Append $w_{cur}u_i(w)t_i(w)$ to $a$. Go to step \ref{step:algrepeat}.} Add $(i_{start}, i)$ to $I_{red}$. \gar{$old(a)w_{cur}w_{i} = new(a)w_{i+1}  \prec w$, $a$ is full, and $t_{new(a)} = t_i(w)$.} 
        \end{itemize}
        \item \textbf{Check whether the piece $w_aw_{cur}u_i(w)$ is $aw_{cur}w_i$--reducible.}\label{step:inner-reducibility-check}
        \begin{itemize}
            \item \textbf{If it is irreducible: append $w_{cur}u_i(w)t_i(w)$ to $a$. Go to step \ref{step:algrepeat}.}\gar{$new(a)w_{i+1}\prec w$, $a$ is full, and $t_a = t_i(w)$.} 
            \item \textbf{If it is reducible: replace $w_{cur}$ with a word over $S_{\bar{t}_a}$ such that $new(w_{cur}) = _G t_aw_a old(w_{cur})u_i(w)t_{i}(w)$. Remove $t_aw_a$ from $a$.} \gar{$new(a)new(w_{cur})w_{i+1} \prec w$, $new(a)$ is not full.}
        \end{itemize}
    \end{enumerate}

\end{enumerate}
\item \textbf{Define $w'$ as $a$.} \gar{$w'  \prec w$.} \label{step:defofw'}
\end{enumerate}

\begin{lem}
    The procedure is well defined and produces a word $w'$ which is a derivation from $w$.
\end{lem}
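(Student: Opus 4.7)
The plan is to prove both assertions by induction on the execution of the algorithm, treating every Guarantee annotation in the pseudocode as an invariant to be checked, and then invoking a simple termination argument.

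First I would verify the initial conditions: after Step~\ref{step:setup} we have $a = \alpha_\ast$, which is full with $t_a = \alpha_\ast = t_0(w)$, and $aw_1 = \alpha_\ast w_1 = w$, so $aw_{i+1}\prec w$ trivially via the identity split-sequence. The rest of the loop then propagates this invariant. In Step~\ref{step:1} the re-indexing of $i$ is purely formal. In Step~\ref{step:reducibility-check} I would observe that because $a$ is full and $t_a = t_{i-1}(w)$, the word $aw_i$ is normal and its $i$th piece is exactly $u_i(w)$, so $aw_i$-reducibility coincides with $w$-reducibility. If $u_i(w)$ is $w$-irreducible, appending $u_i(w)t_i(w)$ to $a$ extends the split-sequence by one index on the right, preserving the guarantee.

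The most delicate part will be the passage into the InnerStep via Steps~\ref{step:wcur}--\ref{step:innerrep} and the InnerStep body itself. Reducibility of $u_i(w)$ gives $\bar t_a = t_i(w)$ and $u_i(w) \in \tau_{t_a}(G_{t_a})$, so $t_a u_i(w) t_a^{-1}$ lies in $G_{t_a^-}$ and can be written as $w_{cur}$ over $S_{\bar t_a}$; removing the trailing $t_a$ from $a$ therefore yields a semi-normal prefix with $aw_{cur}w_{i+1}$ equal to the previous $a w_i$ as group elements, while the split-sequence of the new $aw_{cur}w_{i+1}\prec w$ merges the old indices $i-1$, $i$, $i+1$ into one piece. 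Inside the InnerStep, when the piece $w_a w_{cur} u_i(w)$ of $aw_{cur}w_i$ is tested for reducibility in Step~\ref{step:inner-reducibility-check}, the group equality $new(w_{cur}) =_G t_a w_a\, old(w_{cur})\, u_i(w) t_i(w)$ together with removing $t_aw_a$ from $a$ again absorbs two more indices into the same piece and preserves both $aw_{cur}w_{i+1}\prec w$ and the non-fullness of $a$; in the irreducible and counter-exit branches we append $w_{cur}u_i(w)t_i(w)$, which re-fullifies $a$, closes the current piece with $t_i(w)$, and returns to the outer loop with the AlgStep-level guarantee restored.

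For termination I would observe that $i$ is strictly increased in Steps~\ref{step:1} and~\ref{step:inner-increase} and is otherwise unchanged, while within a single AlgStep the counter $c$ in Step~\ref{step:update_counter2} strictly decreases by at least $1$ per InnerStep (since $\norm{u_i(w)}_{w^0}\geq 1$), so each InnerStep block terminates in finitely many iterations via Step~\ref{step:check-counter} or Step~\ref{step:inner-reducibility-check}. Since $i \leq n+1$ throughout, the outer loop must exit through Step~\ref{step:endcheck} in finitely many AlgSteps, at which point Step~\ref{step:defofw'} sets $w' = a$ with the guaranteed $w' \prec w$. The only conceptual obstacle is keeping careful track of the piece structure of the ``current derived word'' $aw_{cur}w_i$ after each modification of $a$ and $w_{cur}$, since both pieces and split-sequences change simultaneously; once the right bookkeeping convention for the split-sequence merges is fixed, the algebraic identities required all follow immediately from $u_i(w)\in\tau_{t_a}(G_{t_a})$ whenever reducibility holds.
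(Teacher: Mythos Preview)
Your proposal is correct and follows essentially the same approach as the paper: the paper's own proof simply says that the guarantee at Step~\ref{step:setup} trivially holds, and that by induction each subsequent guarantee follows directly from the preceding ones and the description of the step. You carry out precisely this induction in greater detail and additionally supply a termination argument (via the monotone increase of $i$ and the strict decrease of $c$ in the inner loop), which the paper leaves implicit.
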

\begin{proof}
    The guarantee of Step \ref{step:setup} trivially holds. It suffices to show (by induction) that if all guarantees hold up to a certain step, then the next guarantee after this step also hold. This follows directly from each step and the choice of guarantees.
\end{proof}

\begin{prop}[Properties of middle derivations]\label{proposition:properties-of-middle-derivations}
The following properties hold.
\begin{enumerate}
    \item If $i\in I_{core}$, then $u_i(w)$ is $w$--reducible.
    \item If $u_j(w')$ is $w$--reducible and $i\in I_{start}$ is the largest index such that $u_j(w')$ was changed in the $i$-AlgStep, then $(i, k)\in I_{red}$ for some $k$. Furthermore $\norm{u_j(w')}_{w^0}\geq 2\norm{u_i(w)}_{w^0}$.\label{prop:doubling}
   \item The word $w$ is shortened if and only if we never reach Step \ref{step:core-add}, in which case $w'$ is a trivial derivation of $w$ (and hence shortened).
\end{enumerate}
\end{prop}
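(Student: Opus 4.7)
Parts (1) and (3) are bookkeeping from the algorithm. For (1), the set $I_{core}$ is written to only at Step \ref{step:core-add}, which is reached only when the irreducible branch of Step \ref{step:reducibility-check} is \emph{not} taken---i.e. precisely when $u_i(w)$ is $w$--reducible---and the index added is $i_{start}=i$ at that moment. For (3), if Step \ref{step:core-add} is never reached then every AlgStep proceeds through the irreducible branch of Step \ref{step:reducibility-check}, appending $u_i(w)t_i(w)$ to $a$ for $i=1,2,\dots,n$ in turn. Hence $w'$ equals $w$ with the identity split-sequence, so it is a trivial derivation, and every appended piece passed the $w$--irreducibility test, so $w$ is shortened. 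Conversely, if $w$ is shortened then the check in Step \ref{step:reducibility-check} always succeeds and Step \ref{step:core-add} is never reached.

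For (2), let $i\in I_{start}$ be the last AlgStep that changes $u_j(w')$. An AlgStep can terminate in exactly three ways: (a) the irreducible branch of Step \ref{step:reducibility-check}, (b) the irreducible branch of Step \ref{step:inner-reducibility-check}, or (c) the $c<0$ branch of Step \ref{step:check-counter}. In case (a), the appended piece is $u_i(w)$, whose flanking edges $t_{i-1}(w),t_i(w)$ agree with the flanking edges of $u_j(w')$ in $w'$ (because $i$ is the last AlgStep to touch it); since $u_i(w)$ passed the $w$--irreducibility check, $u_j(w')$ is $w'$--irreducible, contradicting the hypothesis. In case (b), the piece $w_aw_{cur}u_i(w)$ passed the $aw_{cur}w_i$--irreducibility check, and its flanking edges in the final $w'$ are preserved (again because $i$ is the last AlgStep to touch it), which translates to $w'$--irreducibility---a contradiction. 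So (c) must hold, and Step \ref{step:check-counter} explicitly adds $(i,k)$ to $I_{red}$, where $k$ is the value of the running index $i$ at exit.

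For the doubling, let $M$ be the number of inner iterations in this AlgStep. The counter $c$ is initialised to $\norm{u_i(w)}_{w^0}$ at Step \ref{step:core-add}, and in the $m$-th iteration Step \ref{step:update_counter2} decrements it by $\norm{u_{i+m}(w)}_{w^0}+\norm{w_a^{(m-1)}}_{w^0}$, where $w_a^{(m-1)}$ denotes the value of $w_a$ at that moment. Thus $c<0$ at exit is the statement
\[
\sum_{m=1}^M\norm{u_{i+m}(w)}_{w^0}+\sum_{m=0}^{M-1}\norm{w_a^{(m)}}_{w^0}>\norm{u_i(w)}_{w^0}.
\]
On the other hand, by tracing the reducible branch of Step \ref{step:inner-reducibility-check} (which removes $t_aw_a$ from $a$ and folds it into $w_{cur}$), the words $w_a^{(0)},\dots,w_a^{(M-1)}$ are exactly the last $M$ pieces of $a$ at the start of this AlgStep: $w_a^{(0)},\dots,w_a^{(M-2)}$ get absorbed into $w_{cur}$, and $w_a^{(M-1)}$ serves as the anchor to which $w_{cur}u_{i+M}(w)t_{i+M}(w)$ is appended at exit. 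Hence the $w$--base of the new piece $u_j(w')$ is the union of the $w$--bases of these $M$ pieces with $u_i(w),u_{i+1}(w),\dots,u_{i+M}(w)$, a contiguous range of pieces of $w$. Using additivity of $\norm{\cdot}_{w^0}$ along composed derivations (an immediate consequence of transitivity and the length-invariance in Lemma~\ref{lemma:basic-properties-derivations}), this gives
\[
\norm{u_j(w')}_{w^0}=\norm{u_i(w)}_{w^0}+\sum_{m=1}^M\norm{u_{i+m}(w)}_{w^0}+\sum_{m=0}^{M-1}\norm{w_a^{(m)}}_{w^0},
\]
and combining with the inequality above yields $\norm{u_j(w')}_{w^0}>2\norm{u_i(w)}_{w^0}$.

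The main obstacle will be keeping the bookkeeping of the composed derivation $w'\prec w\prec w^0$ straight, in particular identifying $\norm{w_a^{(m)}}_{w^0}$ with the sum of $\norm{u_l(w)}_{w^0}$ over the indices $l$ subsumed by $w_a^{(m)}$. This additivity is routine but easy to get off-by-one, so verifying the base case $M=1$ before handling general $M$ is a worthwhile sanity check.
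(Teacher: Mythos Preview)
Your argument is correct and follows the same case analysis as the paper's proof: exit modes (a), (b), (c) of an AlgStep correspond exactly to the three places the paper identifies where pieces of $a$ are added or changed, and you rule out (a) and (b) for a reducible piece in the same way. For the doubling bound you are in fact more careful than the paper. The paper writes ``$c<0$ means $\norm{u_i(w)}_{w^0}<\sum_{j=i+1}^k\norm{u_j(w)}_{w^0}$'' and then uses only that $u_i(w),\dots,u_k(w)$ lie in the $w$--base of $u_j(w')$; this ignores the $\norm{w_a}_{w^0}$ terms that are also subtracted from $c$ in Step~\ref{step:update_counter2}. Your bookkeeping tracks those terms explicitly, identifies the absorbed $w_a^{(m)}$ as the last $M$ pieces of $a$ at the start of the AlgStep, and uses additivity of $\norm{\cdot}_{w^0}$ under composed derivations to get the exact equality for $\norm{u_j(w')}_{w^0}$. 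Both routes reach the same conclusion, but yours closes the gap cleanly (and even yields a strict inequality).
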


\begin{proof}
    Property 1 follows directly from Step \ref{step:reducibility-check}. Property 2 follows from the fact that pieces in $a$ are only changed/added in Steps \ref{step:reducibility-check}, \ref{step:inner-reducibility-check} and \ref{step:check-counter}. In Steps \ref{step:reducibility-check} and \ref{step:inner-reducibility-check} all we do is add an irreducible piece to a full semi-word, so if there is a reducible subword in the end, it cannot come from there. Thus, if we have a $w'$--reducible piece $u_\ell(w')$ it needed to come from $c<0$ in Step \ref{step:check-counter}. Let $(i, k)\in I_{red}$ be the corresponding pair of integers we added when $c<0$. Having $c<0$ means that 
    \begin{align*}
        \norm{u_i(w)}_{w^0} < \sum_{j = i+1}^k \norm{u_j(w)}_{w^0}.
    \end{align*}
    Furthermore, $u_i(w), u_{i+1}(w), \ldots u_{k}(w)$ are all in the $w$--base of $u_\ell(w')$. In particular, 
    \[
        \norm{u_\ell(w')}_{w^0}\geq \sum_{j = i}^k \norm{u_j(w)}_{w^0} \geq 2\norm{u_i(w)}_{w^0}.
    \]

    Property 3 follows from the following observation: if we reach Step \ref{step:core-add}, then $w$ is not shortened since $u_i(w)$ is $w$--reducible. If we never reach Step \ref{step:core-add}, then none of the pieces of $w$ are $w$--reducible, and hence $w$ is shortened.
\end{proof}

Property \ref{prop:doubling} is the key in showing that after $\log(\abs{w^0})$ many middle derivations we get an organized word which is shortened. This is the content of the following proposition.

\begin{prop}\label{prop:log-derivation-is-shortenend}
    Let $w^0$ be an organized word and let $m = \abs{w^0}$. For $i\geq 1$, define $w^i$ as a middle derivation of the pair $(w^{i-1}, w^0)$. Then $w^{\lceil \log (m) \rceil}$ is shortened.
\end{prop}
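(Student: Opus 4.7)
The plan is to proceed by contradiction: assuming $w^{\lceil \log m \rceil}$ contains a reducible piece, I would use Property 2 of Proposition~\ref{proposition:properties-of-middle-derivations} iteratively to produce a reducible piece of $w^{\lceil \log m \rceil}$ whose $w^0$--derived length is at least $m$, and then derive a contradiction from the length invariance property (Lemma~\ref{lemma:basic-properties-derivations}, property 2), which caps the sum of all $w^0$--derived lengths of pieces of $w^{\lceil \log m \rceil}$ at $m - 1$.

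Concretely, starting from a reducible piece $u_{j_{\lceil \log m \rceil}}(w^{\lceil \log m \rceil})$, I would construct reducible pieces $u_{j_l}(w^l)$ for $l = \lceil \log m \rceil - 1, \ldots, 0$ as follows. Given $u_{j_l}(w^l)$ reducible in $w^l$ (viewed as the middle derivation of $w^{l-1}$), Property 2 provides an index $i \in I_{start}$ with $(i,k) \in I_{red}$ such that
\[
\norm{u_{j_l}(w^l)}_{w^0} \geq 2\norm{u_i(w^{l-1})}_{w^0}.
\]
Inspecting the algorithm, $(i,k)$ can be added to $I_{red}$ only in Step~\ref{step:check-counter}, which is reached only after Step~\ref{step:core-add} placed $i$ into $I_{core}$; Property 1 then guarantees that $u_i(w^{l-1})$ is $w^{l-1}$--reducible, so setting $j_{l-1} := i$ allows the induction to continue. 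Chaining the doubling bounds, and using that any piece has $w^0$--derived length at least $1$ (since the derived length is the base length plus one), I get
\[
\norm{u_{j_{\lceil \log m \rceil}}(w^{\lceil \log m \rceil})}_{w^0} \geq 2^{\lceil \log m \rceil}\norm{u_{j_0}(w^0)}_{w^0} \geq 2^{\lceil \log m \rceil} \geq m.
\]

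On the other hand, by transitivity of derivations we have $w^{\lceil \log m \rceil} \prec w^0$, so length invariance gives $\sum_i \norm{u_i(w^{\lceil \log m \rceil})}_{w^0} = m - 1$; in particular every single piece has $w^0$--derived length at most $m - 1 < m$, contradicting the bound above.

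The main obstacle has already been handled in Proposition~\ref{proposition:properties-of-middle-derivations}, whose Property 2 encodes the essential doubling mechanism built into the counter $c$ of the middle-derivation algorithm. Given that statement, the present proposition is a short bookkeeping argument. The only subtle step is to verify that the index $i$ supplied by Property 2 indexes a genuinely reducible piece of $w^{l-1}$, which requires noting that first coordinates of pairs in $I_{red}$ lie in $I_{core}$ and then invoking Property 1.
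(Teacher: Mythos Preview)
Your proof is correct and follows essentially the same approach as the paper's: both arguments combine the doubling estimate of Proposition~\ref{proposition:properties-of-middle-derivations}(2) with the length-invariance bound of Lemma~\ref{lemma:basic-properties-derivations}(\ref{prop:length-invariance}). The paper phrases the doubling as a forward induction (``every reducible piece of $w^i$ has $w^0$--derived length $\geq 2^i$'') while you unwind it backwards from a hypothetical reducible piece of $w^{\lceil \log m\rceil}$, but the content is identical. Your explicit verification that the index $i$ produced by Property~2 lies in $I_{core}$ (hence indexes a reducible piece by Property~1) is a point the paper leaves implicit.
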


\begin{proof}
    We inductively show the following statement. If a piece $u_j(w^i)$ is $w^i$--reducible, then $\norm{u_j(w^i)}_{w^0}\geq 2^i$.

    Length invariance (Lemma \ref{lemma:basic-properties-derivations} \ref{prop:length-invariance}) implies that for all $i$, no piece of $w^i$ can have derived length more than $m$. Thus, the induction hypothesis shows that $w^{\lceil \log (m) \rceil}$ is indeed shortened.

    For $i = 0$, the induction hypothesis is trivial, since every piece has derived length at least $1$. Next assume that the induction hypothesis holds for $i$. We will show that it holds for $i+1$. Let $u_j(w^{i+1})$ be a $w^{i+1}$--reducible piece. Proposition \ref{proposition:properties-of-middle-derivations}\eqref{prop:doubling} shows that $\norm{u_j(w^{i+1})}_{w^0}\geq 2 \norm{u_k(w^i)}_{w^0}$ for some $w^i$--reducible piece $u_k(w^i)$. The induction hypothesis for $i$ implies that $\norm{u_k(w^i)}_{w^0}\geq 2^i$ and hence $\norm{u_j(w^{i+1})}_{w^0}\geq 2^{i+1}$, which shows that the induction hypothesis for $i+1$ indeed holds.
\end{proof}

\subsection{Storing organized words on a Turing machine}\label{sec:storing-normal-words}

We now explain how to efficiently store the information needed to perform middle derivations  on a Turing machine.

If we have a word $w$ which is organized and derived from a word $w_0$, we will save this on a Turing machine on three bands 
$\word{b}, \edge{b}, \leng{b}$ as follows. 
\begin{itemize}
    \item $\word{b}$ reads the word $\ast u_1(w) \# u_2(w) \# u_3(w) \# \ldots \# u_{n-1}(w)\# u_n(w)\ast$.
    \item $\edge{b}$ reads the word $\ast t_0(w)t_1(w)\ldots t_n(w)\ast$.
    \item $\leng{b}$ reads the word $\ast \ell_1\#\ell_2\#\ldots \#\ell_{n-1}\#\ell_n\ast$, where $\ell_i$ are words in the alphabet $\{1\}$ and the length of $\ell_i$ is equal to the derived length $\norm{u_i(w)}_{w^0}$ for all $1\leq i \leq n$.
\end{itemize}
Here $n = \pabs{w}$ denotes the path-length of $w$. We denote a set of bands $(\word{b}, \edge{b}, \leng{b})$ by $b_*$ and say that $(w,w^0)$ is stored on $b_*$ organizedly.

\begin{lem}\label{lem:compute-normal-form}
     Let $w'$ be a word in $S$. Then in time $\mc O(\abs{w'})$ we can compute a word $w$ which is organized, satisfies $w =_G w'$ and has $\abs{w}\leq \abs{V(\Gamma)}\abs{w'}+2$. Furthermore, if we have a word $w$ which is organized, and bands $b_*$ (which are empty) then we can store $(w, w)$ organizedly on $b_*$ in time $\mc O(\abs{w'})$.
\end{lem}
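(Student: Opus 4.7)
The plan is to execute both claims of the lemma in a single left-to-right pass of $w'$, writing the normal-form word $w$ onto the three bands of $b_*$ simultaneously.

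First, I would perform a one-time preprocessing of the fixed finite graph $\Gamma$: for every ordered pair $(u,v)\in V(\Gamma)^2$ I store a chosen shortest edge-path from $u$ to $v$ (of length at most $\abs{V(\Gamma)}-1$) and hard-code the whole table into the Turing machine's transition function, so that any lookup is $O(1)$. I then maintain a current vertex $v_{\mathrm{cur}}$, initialized to $v_0$, and begin the output with $\alpha_\ast$ on $\edge{b}$ together with leading $\ast$'s on $\word{b}$ and $\leng{b}$. For each letter $s$ of $w'$ I first determine the vertex $v_s$ at which $s$ acts: $v_s=v$ if $s\in S_v$; $v_s=\alpha^+$ if $s\in S_\alpha$ (using the containment $S_\alpha\subseteq S_{\alpha^+}$, or, in general, replacing $s$ by a precomputed bounded-length word over $S_{\alpha^+}$); and $v_s=\alpha^-$ if $s$ is the edge $\alpha$. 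If $v_{\mathrm{cur}}\neq v_s$, I first stream the stored path from $v_{\mathrm{cur}}$ to $v_s$ onto $\edge{b}$, updating $v_{\mathrm{cur}}$, while inserting an empty piece (a $\#$ on $\word{b}$ and a $\#1$ on $\leng{b}$) between every pair of consecutive edges added. Then I emit $s$ itself: a vertex letter is appended to the current piece on $\word{b}$ with one extra $1$ on $\leng{b}$, whereas an edge is written to $\edge{b}$, a $\#$ is placed on $\word{b}$ and $\leng{b}$, the initial $1$ for the ``$+1$'' of the newly opened empty piece is written on $\leng{b}$, and $v_{\mathrm{cur}}$ is updated to $\alpha^+$. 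After the last letter I route $v_{\mathrm{cur}}$ back to $v_0$ via the path table and close with $\alpha_\ast$ on $\edge{b}$ and trailing $\ast$'s.

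The output $w$ is in normal form by construction: edges and pieces strictly alternate, the edge sequence spells a closed path at $v_0$ in $\Gamma$, and every piece lies in the correct $S_v$; moreover the insertions are consistent with the implicit convention for interpreting vertex letters as loops at $v_0$, so $w =_G w'$. Each letter of $w'$ contributes at most $\abs{V(\Gamma)}$ letters to $w$, giving $\abs{w}\leq \abs{V(\Gamma)}\abs{w'}+O(1)$; the per-letter cost on each of the three bands is constant (bounded in terms of $\abs{V(\Gamma)}$), so the whole pass runs in $\mathcal{O}(\abs{w'})$ time, proving the first claim. For the second claim, observe that for the trivial derivation $w\prec w$ each piece is its own $w$-base, so $\norm{u_i(w)}_{w} = \abs{u_i(w)}+1$; if the input is already a normal word $w$, I drop the path-completion branch and simply stream the letters of $w$ onto $b_*$ in the same fashion, which takes $\mathcal{O}(\abs{w})$ time.

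The only thing that needs care is the coordination of the three Turing machine heads so that the $\ast$ and $\#$ markers land in the right cells on each band while $w$ is emitted. This is routine once we maintain the invariant that, after processing $k$ letters of $w'$, the three heads sit at the ends of the currently written prefix of the representation of $w$. No deep algorithmic obstacle appears; the essential point is that the constant-size path table, determined by $\Gamma$ alone, bounds the work per input letter, so the total runtime is linear in $\abs{w'}$.
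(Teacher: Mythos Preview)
Your approach mirrors the paper's---a left-to-right scan with a precomputed path table and a running current vertex---but there is one real gap in the argument for $w =_G w'$.

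You connect vertices by \emph{shortest paths in $\Gamma$}. A path in $\Gamma$, however, represents the trivial element of $G=\pi_1(\mathcal G)$ only if every edge on it is trivial in $G$; this holds for the edges of a fixed spanning tree $T$ (in the presentation $\pi_1(\mathcal G,T)$ one has $t=_G 1$ for all $t\in T$), but not for arbitrary edges. Concretely, if $\Gamma$ is a triangle on $v_0,v_1,v_2$ with tree edges $\alpha:v_0\to v_1$ and $\beta:v_1\to v_2$ and a non-tree edge $\gamma:v_2\to v_0$, then your shortest route from $v_2$ back to $v_0$ may well be $\gamma$, and inserting it produces a word that differs from $w'$ by the non-trivial stable letter $\gamma$. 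The paper avoids exactly this by routing along geodesics in a spanning tree $T$; these still have length at most $|V(\Gamma)|-1$ and consist entirely of edges that are trivial in $G$. Replacing ``shortest path in $\Gamma$'' by ``geodesic in $T$'' in your path table fixes the argument without affecting any of your time or length estimates. Your appeal to an ``implicit convention for interpreting vertex letters as loops at $v_0$'' does not rescue the point: with the generating set as set up in the paper one has $S_v\subset G_v\subset G$ with no conjugation, so the connecting paths you insert must themselves be trivial in $G$.

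Apart from this, your single-pass implementation (writing $w$ directly onto $b_*$) is a harmless streamlining of the paper's two-stage version, and your handling of the ``furthermore'' part is the same as the paper's.
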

\begin{proof}
    Recall that since $\mc G$ is a graph of groups, there is a spanning tree $T$ of its underlying graph $\Gamma$ such that $t =_G 1$ for all edges $t\in T$. For each pair of vertices $u, v\in \Gamma$, we precompute a path $(t_1, \ldots, t_k)$ which is a geodesic in $T$ from $u$ to $v$. Note that $k< \abs{V(\Gamma)}$ for any choice of $u, v$.

    We go through letters $c$ in $w'$ from left to right, while remembering a vertex $v\in V(\Gamma)$ which should be thought of as the vertex group ``we are currently in''. Initially, we define $w : = \alpha_*$ and we have that $v = v_0$. When looking at a letter $c$ of $w'$ we do the following. If $c= \alpha_*$, we do not do anything. If $c \in E(\Gamma)$, then we add $t_1\ldots t_kc$ to $w$, where $(t_1, \ldots , t_k)$ is the geodesic in $T$ from $v$ to $c^-$. We then set $v$ to $c^+$. On the other hand, if $c\in S_u$ for some vertex $u\in V(\Gamma)$, then we add $t_1\ldots t_kc$ to $w$ where $(t_1, \ldots , t_k)$ is the geodesic from $v$ to $u$ in $T$. We then set $v$ to $u$. At the end, we add $\alpha_*$ to $w$.

    For each letter of $w'$, we added at most $\abs{V(\Gamma)}$ letters to $w$ (plus the two letters $\alpha_*$ at the very end and the very beginning) hence $\abs{w}\leq \abs{V(\Gamma)}\abs{w'} +2$.

    The second part is straightforward. We first add $\ast$ to all the bands in $b_*$, and add $\alpha_*$ to band $\edge{b}$. Then, starting from the second letter in $w$, we go through the letters in the word $w$ from left to right. If the current letter $c$ is in $E(\Gamma)$, then we add $\#$ to $\word{b}$, $1\#$ to $\leng{b}$ and $c$ to $\edge{b}$. If $c = \alpha_*$, then we add $\ast$ to $\word{b}$, $1\ast$ to $\leng{b}$ and $c$ to $\edge{b}$. Otherwise, we add $c$ to $\word{b}$ and $1$ to $\leng{b}$. It is easy to verify that this procedure leads to the desired result. 
\end{proof}

In view of what we have done so far, we now point out that to solve efficiently the word problem for $\pi_1(\mc{G})$ we only need to know that the word problem in the vertex groups can be solved efficiently, and that we can efficiently compute middle derivations.

\begin{cor}
\label{cor:A12_implies_wp}
    Let $\mc G$ be a graph of groups and $f(k)\geq k$ be a convex function and $K\geq 1$ an integer. Suppose the following hold.
    \begin{enumerate}[label = (A\arabic*)]
        \item We have an algorithm which decides, for each vertex group $G_v$, whether a given word $w\in G_v$ represents the identity, and does so in time at most $f(\abs{w})$.\label{alg:vertex-group-trivial}
        \item We have an algorithm which computes, given an organized word $w^0$ and a $(K, w^0)$--non-sprawling derivation $w$ of $w^0$ which is stored organizedly, a $(K, w^0)$--non-sprawling middle derivation of $(w, w^0)$ (also stored organizedly), outputs whether the middle derivation was the trivial derivation, and does so in time at most $f(\abs{w^0})$.\label{alg:middle-der}
    \end{enumerate}
    Then the word problem in $G = \pi_1(\mc G)$ can be solved in time at most $\log(m)f(m)$ time, where $m$ denotes the length of the input word.
\end{cor}
\begin{proof}
    Let $w$ be a word over $S$. Lemma~\ref{lem:compute-normal-form} states that we can compute an organized word $w^0$ in time at most $\abs{w}$ and $\abs{w^0}\preceq \abs{w}$. For $i\geq 1$ we can now use Algorithm~\ref{alg:middle-der} to inductively compute a $(K, w^0)$--non-sprawling middle derivation $w^i$ of $(w^{i-1}, w^0)$. If $\pabs{w^i} = 1$, we can use Algorithm~\ref{alg:vertex-group-trivial} to check whether $u_1(w^i)$ is trivial. If so, we know that $w$ represents the trivial word and we stop. If not we know that $w$ does not represent the trivial word and we stop. If instead $\pabs{w^i}\geq 2$ and if Algorithm~\ref{alg:middle-der} outputted that $w^i\prec w^{i-1}$ was a trivial derivation, then we know by Lemma~\ref{proposition:properties-of-middle-derivations} that $w^i$ (and hence $w$) does not represent the trivial word and we stop. If none of the above happens, we use Algorithm~\ref{alg:middle-der} to compute a middle derivation $w^{i+1}\prec w^i$ and repeat the above step. Proposition~\ref{prop:log-derivation-is-shortenend} shows that the procedure stops at the latest when $i = \lceil \log (\abs{w^0}) \rceil$. Every step can be done in time at most $f(\abs{w^0})$ and hence the algorithm takes at most $\lceil \log (\abs{w^0}) \rceil f(\abs{w^0})$ time.
\end{proof}

\subsection{Middle derivations of free products}\label{sec:prop1.3proof}

In this section, we will show how to compute the middle derivation efficiently in the case where all edge groups are trivial. In particular, this includes free products. In combination with Corollary~\ref{cor:A12_implies_wp}, this will yield a proof of Proposition~\ref{prop:free_product}.

\freeprod*

The main tool in the proof of Proposition~\ref{prop:free_product} is the following Lemma, which states that we can compute the middle derivation efficiently. 

\begin{lem}\label{lemma:quick-middle-der-of-free-prod}
Let $\mc G$ be a graph of groups with trivial edge groups and let $f(k)\geq k$ be a convex function. Suppose the following holds.
\begin{enumerate}[label = (A1)]
    \item We have an algorithm, which determines, given a vertex $v\in V(\Gamma)$ and a word $w$ over $S_{v}$ whether $w$ represents the trivial element in $G_v$ and does so in time at most $f(\abs{w})$. \label{alg:free-product-edge}
\end{enumerate}
Then, given a $(1, w^0)$-non-sprawling derivation $w$ of $w_0$ stored organizedly, we can compute a $(1, w^0)$-non-sprawling middle derivation $w'$ (stored organizedly) of $(w, w_0)$ in time at most $f(\abs{w_0})$.
\end{lem}

In this case, the key is that $w_{cur}$ is trivial at all times, which allows us to compute a middle derivation efficiently. To prove Lemma \ref{lemma:quick-middle-der-of-free-prod} it is enough to show that every iteration of \step can be done in time at most $f(\norm{u_i(w)}_{w^0}) + \norm{u_i(w)}_{w^0} + \norm{u_j(w)}_{w^0}$, where $i$ is the instigator of that iteration and $j+1$ is the instigator of the next iteration. To show this, we annotate to each step in the definition of a middle derivation how much time it takes to perform this step (and if unclear, how to perform the step).

\textbf{Detailed setup.} We have bands $\mf w_*$ and $\mf a _*$ where we store the word $(w, w^0)$ and $(a, w^0)$ in (semi)-organizedly. We have bands $\mf c_{counter}$ and $\mf c_{leng}$ where we store the value of $c$ and the $w^0$--derived length of $w_{cur}$. Further, we have auxiliary bands where we run the algorithm \ref{alg:free-product-edge}.

At the beginning of every step, we aim to ensure the following: 
\begin{itemize}
    \item the head of any of band $\mf w_*$ is at the end of its $i$-th segment.
    \item the head of any band of $\mf a_*$ is at the end of its band. 
\end{itemize}
Recall that since $w$ is $(1, w^0)$-non-sprawling we have that $\abs{u_i(w)}\leq \norm{u_i(w)}_{w^0}$ for all $1\leq i \leq n$. We ensure that at all points in time that $a$ is $(1, w^0)$-non-sprawling.

With this, it takes at most $\norm{u_{new(i)}(w)}_{w^0}$ time to increase $i$, since we also have to move the heads of the bands $\mf w_*$.

At the beginning of every \step we aim to ensure that: 
\begin{itemize}
    \item the bands $\mf c_{counter}$ and $\mf c_{leng}$ are empty.
\end{itemize}

We do not store the value of $i$ (we ``know'' the value of $i$ by the position of the heads of the $\mf w_*$ bands). Similarly, we do not store $i_{start}$ or the sets $I_{start}, I_{cen}$. Those are purely used for the analysis.

Now, in the case of free products, we turn the middle derivation procedure into an algorithm, by embellishing it and explaining how things are stored on the Turing machine $\mc M$. 

We use the same notation as in the definition of a middle derivation. In particular,  $n = \pabs{w}$ denotes the path-length of $w$. For $1\leq i \leq n+1$, $w_i = \Pi_{j=i}^n u_i(w)t_i(w)$. Further $t_{last}(a)$ and $w_{last}(a)$ are denoted by $t_a$ and $w_a$ respectively.

\begin{enumerate}[label = \Alph*)]
    \item \textbf{Define $a := \alpha_\ast$ and $i:= 0$. } This can be done in constant time. \label{step:setup''}
    \item \textbf{Repeat the following.}\label{step:algrepeat''}
\begin{enumerate}
    \item \textbf{Increase $i$ by one.} \tim{This takes at most $\norm{u_{new(i)}}_{w^0}$ time.} \label{step:1''}
    \item  \textbf{Check whether $i = n+1$.} This can be done by checking whether the cell after the current position of the head of $\edge{\mf w}$ reads the letter $\ast$. \tim{This takes constant time.}\label{step:endcheck''} 
    \begin{itemize}
        \item \textbf{If $i = n+1$: Go to Step \ref{step:defofw'''}} \tim{This takes constant time.}
        \item \textbf{If $i\neq n+1:$} Do nothing. (Since the set $I_{start}$ is for analysis purposes only, the actual algorithm does not have to do anything) \tim{This takes constant time.}
    \end{itemize}
    \item \textbf{Check whether $u_i(w)$ is $w$--reducible.} We first have to check whether $t_{i}(w) = \bar{t}_a$ and if so, we have to use the algorithm \ref{alg:free-product-edge} to check whether $u_i(w)$ represents the trivial element. \tim{This takes at most $f(\abs{u_i(w)})\preceq f(\norm{u_i(w)}_{w^0})$ time.} \label{step:reducibility-check''}
        \begin{itemize}
            \item \textbf{ If $u_i(w)$ is $w$--irreducible:} append $u_i(w)t_i(w)$ to $a$ and go to Step \ref{step:algrepeat}. \tim{This takes at most $\norm{u_i(w)}_{w^0}$ time.} Namely, we have to append $ t_i(w)$ to $\edge{\mf a}$, append $\# 1^{\norm{u_i(w)}_{w^0}}$ to $\leng{\mf a}$ and append $\# u_i(w)$ to $\word{\mf a}$. \gar{$new(a)$ is $(1, w^0)$--non-sprawling.}
        \end{itemize}
    \item \textbf{Set $c$ to $\norm{u_i(w)}_{w^0}$.} We have to update $\mf c_{counter}$. \tim{This takes at most $\norm{u_i(w)}_{w^0}$ time.} \label{step:core-add''}

    \item \textbf{Set $w_{cur}$ to a word over $S_{\bar{t}_a}$ which satisfies $w_{cur} = _G t_a u_i(w) t_a^{-1}$ and remove $t_a$ from $a$.} Since all edge groups are trivial, we can choose $w_{cur}$ as the trivial word and hence do not have to store it anywhere. However, we have set $c_{leng}$ to $\norm{u_i(w)}_{w^0}$. \tim{This takes at most $\norm{u_{i_{start}}(w)}_{w^0}$ time.} \gar{$ c = 2\norm{u_{i_{start}}(w)}_{w^0} - \norm{w_{cur}}_{w^0}$.} \label{step:define_w_cur''}

    \item \textbf{Repeat the following}.\label{step:innerrep''}
    \begin{enumerate}[label = \roman*)]
        \item \textbf{Increase $i$.}  \tim{This takes at most $\norm{u_{new(i)}(w)}_{w^0}$ time.}\label{step:inner-increase''} 
        \item \textbf{Subtract $\norm{u_i(w)}_{w^0} + \norm{w_a}_{w^0}$ from $c$.}  If $c$ is negative, we do not care about the precise value of it, so once $c$ becomes negative, we can stop subtracting. Define $c_{dif} =\min\{ \norm{u_i(w)}_{w^0} + \norm{w_a}_{w^0}, old(c)+1\}$, note that $c_{dif}$ is the exact amount we subtract from $c$. \tim{This step takes at most $c_{dif}$ time. }\gar{ $c = 2\norm{u_{i_{start}}(w)}_{w^0} - \norm{w_{cur}}_{w^0} - c_{dif}$}  \label{step:update_counter2''}
        \item \textbf{Check whether $c\geq 0$.} \tim{This takes constant time}\label{step:check-counter''}
        \begin{itemize}
            \item \textbf{If $c < 0$: Append $w_{cur}u_i(w)t_i(w)$ to $a$. Go to step \ref{step:algrepeat}.} Again, we have to update all bands in $\mf a_{\ast}$. Namely, we have to append $t_i(w)$ to $\edge{\mf a}$, append $\# 1^{\norm{w_{cur}}_{w^0}+\norm{u_i(w)}_{w^0}}$ to $\leng{\mf a}$ and append $ u_i(w)$ to $\word{\mf a}$. \tim{This takes at most $\norm{u_i(w)}_{w^0} + \norm{u_{i_{start}}(w)}_{w^0}$ time.} Lastly, we have to reset $c_{leng}$ and $c_{counter}$. \tim{This takes at most $ \norm{u_{i_{start}}(w)}_{w^0}$ time.} Note that $\norm{w_a}_{w^0}$ increases by $\norm{u_i(w)}_{w^0} + \norm{w_{cur}}$ while $\abs{w_a}$ increases by $\abs{u_i(w)}$. \gar{$new(a)$ is $(1, w^0)$--non-sprawling.}
        \end{itemize}

        \item \textbf{Check whether the piece $w_aw_{cur}u_i(w)$ is $aw_{cur}w_i$--reducible.} We first check whether $t_a = \bar{t}_i(w)$ and if so, we use Algorithm~\ref{alg:free-product-edge} to check whether $w_au_i(w)$ is trivial.\tim{This takes at most $ f(\abs{w_a} + \abs{u_i(w)})\preceq f(c_{dif})$--time}\label{step:inner-reducibility-check''}
        \begin{itemize}
            \item \textbf{If it is irreducible: append $w_{cur}u_i(w)t_i(w)$ to $a$. Go to step \ref{step:algrepeat}.} Again, we have to update all bands in $\mf a_{\ast}$ as if $c<0$ and reset the bands $c_{leng}$ and $c_{counter}$. \tim{This takes at most $ \norm{u_i(w)}_{w^0} + \norm{u_{i_{start}}(w)}_{w^0}$ time.} \gar{$new(a)$ is $(1, w^0)$--non-sprawling.}
            \item \textbf{If it is reducible: replace $w_{cur}$ with a word over $S_{\bar{t}_a}$ such that $new(w_{cur}) = _G t_aw_a old(w_{cur})u_i(w)t_{i}(w)$. Remove $t_aw_a$ from $a$.} We again choose $w_{cur}$ as the trivial word and hence do not have to store it anywhere. We have to update $c_{leng}$ and the bands $\mf{a}_\ast$. \tim{This takes at most $ c_{dif}$ time.} \gar{ $c = 2\norm{u_{i_{start}}(w)}_{w^0} - \norm{w_{cur}}_{w^0} - c_{dif}$}
        \end{itemize}
    \end{enumerate}

\end{enumerate}
\item \textbf{Define $w'$ as $a$.} \gar{$w'$ is $(1, w^0)$--non-sprawling.}\label{step:defofw'''}
\end{enumerate}

We want to bound the runtime of every AlgStep. Let $i$ be its instigator and let $j$ be the instigator of the next step. Steps~\ref{step:1''} to \ref{step:define_w_cur''} take at most $f(\norm{u_i(w)}_{w^0})$ time. We claim that Step~\ref{step:innerrep''} takes at most $ f(\norm{u_i(w)}_{w^0}) + \norm{u_{j-1}(w)}_{w^0}$--time. Namely, any iteration of InnerStep, unless it is the last iteration of InnerStep, takes at most $f(c_{dif})$ time. Since $c_{dif}$ gets subtracted from $c$ (whose initial value is $\norm{u_i(w)}_{w^0}$) and $c$ does not become negative unless it is the last iteration of InnerStep, the total time to perform all but the last iteration of \sstep is at most $f(\norm{u_i(w)}_{w^0})$, where we used that $f(k)\geq k$ is convex. The last iteration of \sstep takes at most $ \norm{u_{j-1}(w)}_{w^0} + f(\norm{u_{i}(w)}_{w^0})$ time.  

Using that $f(k)\geq k$ is a convex function and Lemma~\ref{lemma:basic-properties-derivations}\eqref{prop:length-invariance}, we get that the total runtime of this implementation of the middle derivation takes at most $f(\abs{w^0})$--time, which concludes the proof of Lemma~\ref{lemma:quick-middle-der-of-free-prod}.

\begin{proof}[Proof of Proposition~\ref{prop:free_product}]
    This follows directly from Lemma~\ref{lemma:quick-middle-der-of-free-prod} and Corollary~\ref{cor:A12_implies_wp}.
\end{proof}

\begin{rem}\label{rem:divide-and-conquer}
Saul Schleimer suggested to us a different but not dissimilar approach to solve the word problem for free products which has a nice description in terms of a recursion. The recursive nature makes it intricate to implement on a Turing machine, and we preferred here to present our method in order to be as explicit and self-contained as possible regarding the implementation. Namely, one could also use a ``divide-and-conquer'' approach which we now describe. Recursively, starting with a word $a$ of length $n$, we would like to split it into two halves $b$ and $c$, and first solve the word problem for $b$ and $c$. More precisely, we would like to put $b$ and $c$ in shortened form, so that  a word $a =  bc$, where we know that $b$ and $c$ are shortened and now we have to ensure that $bc$ is shortened. To ensure this we only have to figure out whether the piece at the intersection of $b$ and $c$ is reducible, and if so continue reducing until we cannot reduce anymore (for example because we reached the other end of $b$ or $c$). Instead of doing this recursively, to implement it on a Turing machine one can use the ``bottom-up'' approach, that is, starting with $i=0$ and increasing $i$ in every round, we split $w$ into $2^{\log{m}-i}$ words of length $2^i$ and solve the word problem for those words. For $i = 0$ this is trivial and we can make sure that the subwords are shortened. For $i > 1$, we discussed above how to solve the word problem for $a=bc$ where $b$ and $c$ are shortened. With this, bringing $a$ into a shortened form takes $f(2^{i+1})$--time. Thus in total, solving the word problem takes $\log(m) f(m)$--time. We note that one needs to keep track on the Turing machine of where the segments to ``merge'' are at each step, and we believe that this is about as difficult as keeping track of the derived length in our method.
\end{rem}

\subsection{Storing words and elements on a Turing machine for admissible graph of groups}\label{sec:storing_in_edge_groups}

To compute the middle derivation, we have to store and manipulate words that lie in given edge or vertex groups efficiently. Luckily, the geometry of admissible graph of groups makes this easier because of various facts about the structure of edge and vertex groups that are summarised in Lemma \ref{lem:properties_of_vertex_groups}, which we recall below for convenience.

\edgegroups*

Let $\alpha\in E(\Gamma)$ be an edge. We say that a word $w$ is in \emph{edge normal form for the edge $\alpha$} if $w = c_\alpha^ko_{\alpha}^\ell h$ for some $k, \ell\in \Z$ and $h\in H_\alpha$. 

Thus for a given edge $\alpha\in E(\Gamma)$ and an element $g\in \tau_{\alpha}(G_{\alpha})$, there are unique integers $k, \ell$ and a unique coset representative $h\in H_{\alpha}$ such that $g = c_{\alpha}^{k}o_{\alpha}^{\ell}h$. We write $(g, \alpha)\equiv (k, \ell, h)$. We can store the pair $(g, \alpha)\equiv (k, \ell , h)$ on a Turing machine as follows. We can save $\alpha$ and $h$ in the state (since there are only finitely many options for those) and have two bands $b_1$ and $b_2$ where we store the integers $k$ and $\ell$ respectively and whose heads are at the end of their bands.  

Assume we have elements $(g, \alpha) \equiv (k, \ell, h)$ and $(g', \alpha) \equiv (k', \ell', h')$ stored. We can replace $(g, \alpha)$ by $(gg', \alpha)$ (or equivalently by $(g'g, \alpha)$) as follows. Add $k'+ k''$ to $k$, add $\ell' + \ell''$ to $\ell$ and replace $h$ by $h''$, where $(k'', \ell'', h'') \equiv (hh', \alpha)$. For this step we use that all edge groups in admissible graph of groups are abelian by definition. Since there are only finitely many cosets, $\abs{k''}$ and $\abs{\ell''}$ are bounded by a constant. Hence, replacing $(g, \alpha)$ by $(gg', \alpha)$ only takes $\mc O(\abs{k'}+\abs{\ell'}) = \mc O(\abs{g'})$ time.

Another operation we want to perform is the replacement of an element $(g, \alpha)\equiv (k, \ell, h)$ with the element $(t_{\alpha}g t_{\alpha}^{-1}, \bar{\alpha})$. Property \ref{prop:conjugate_by_edge} implies that $(t_{\alpha}g, t_{\alpha}^{-1}, \bar{\alpha})\equiv (\ell + k', k+ \ell', h')$, where $(k', \ell', h') \equiv (t_{\alpha}ht_{\alpha}^{-1}, \bar{\alpha})$. Again, since there are only finitely many cosets, $\abs{k'}$ and $\abs{\ell'}$ are bounded by a constant. However, the naive way to replace $(g, \alpha)$ with $(t_{\alpha}g, t_{\alpha}^{-1}, \bar{\alpha})$ still takes $\mc O (\abs{g})$ time, since we need to copy what is written in band $b_1$ to band $b_2$ and vice versa.

If we are a little bit more careful with how we store elements $(g, \alpha)\equiv (k, \ell, h)$, then such a replacement can be done in constant time. Namely, instead of saving $k$ on $b_1$ and $\ell$ on $b_2$, we will (in the state) remember indices $i_{cen}$ and $i_{ort}$ (which take distinct values in $\{1, 2\}$) and save $k$ on $b_{i_{cen}}$ and $\ell$ on band $b_{i_{ort}}$. With this, it is easy to replace $(g, \alpha)$ by $(t_{\alpha}g, t_{\alpha}^{-1}, \bar{\alpha})$; we only have to switch $i_{cen}$ and $i_{ort}$ and then add $k'$ and $\ell'$ to the bands $b_{i_{cen}}$ and $b_{i_{ort}}$ respectively.

To summarize: 
\begin{itemize}
    \item Replacing $(g, \alpha)$ by $(gg', \alpha)$ takes $\mc O(\abs{g'})$ time.
    \item Replacing $(g ,\alpha)$ by $(t_{\alpha}g, t_{\alpha}^{-1}, \bar{\alpha})$ takes $\mc O (1)$ time.
\end{itemize}

\subsection{Computing the middle derivation for admissible graph of groups efficiently}

The goal of this section is to prove the following Lemma, which states that we can compute the middle derivation efficiently in admissible graphs of groups.

\begin{lem}\label{lemma:compute_center_der_quickly}
Let $\mc G$ be an admissible graph of groups and let $K$ be the constant from Lemma~\ref{lem:properties_of_vertex_groups}.

Then, given a $(K, w^0)$-non-sprawling derivation $w$ of $w_0$ stored organizedly we can compute a $(K, w^0)$-non-sprawling middle derivation $w'$ of $(w, w^0)$ (stored organizedly) in time at most $ \abs{w^0}$.
\end{lem}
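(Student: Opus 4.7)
The plan is to implement the abstract middle-derivation algorithm of Section~\ref{sec:admissible} on a Turing machine, exploiting the two structural features of admissible graphs of groups collected in Lemma~\ref{lem:properties_of_vertex_groups}: each vertex group is a $\mathbb{Z}$-central extension $Z_\mu \hookrightarrow G_\mu \twoheadrightarrow F_\mu$ whose peripheral cyclic subgroups $\langle a_\alpha\rangle$ of $F_\mu$ form a malnormal family, and each edge-group element admits the $K$-quasi-geodesic normal form $c_\alpha^k o_\alpha^l h$. The data layout largely follows the free-product case: $(w, w^0)$ and $(a, w^0)$ live on normal-form band triples, and the counter $c$ sits on its own band in unary; crucially, $w_{\mathrm{cur}}$ — which throughout the algorithm lies in some edge group $\tau_\beta(G_\beta)$ — is stored not as a word but as a triple $(k, l, h)$ via the scheme of Section~\ref{sec:storing_in_edge_groups}, with the $(i_{\mathrm{cen}}, i_{\mathrm{ort}})$ pointer trick making conjugation by $t_a$ cost $O(1)$.

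The technical heart is a subroutine that, given a word $v$ over $S_\mu$ and an edge $\alpha$ with $\alpha^+ = \mu$, in time $O(\abs{v})$ decides whether $v \in \tau_\alpha(G_\alpha)$ and, if so, returns coordinates $(k, l, h)$ with $v =_G c_\alpha^k o_\alpha^l h$. Project $v$ to $F_\mu$ linearly, apply Proposition~\ref{prop:quasi_convex} to the quasiconvex (being cyclic and malnormal) subgroup $\langle a_\alpha\rangle < F_\mu$ to test membership and obtain a quasi-geodesic representative, from which $l$ and the constant-size coset class $h$ are immediately read off; then multiply $v$ by a word of length $O(\abs{l}+1) \leq O(\abs{v})$ representing $h^{-1}o_\alpha^{-l}$ to get a word over $S_\mu$ representing a central element, and apply Proposition~\ref{prop:central_word_problem} to recover $k$. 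This subroutine implements Step~\ref{step:reducibility-check} and the initialisation of $w_{\mathrm{cur}}$ in Step~\ref{step:wcur}, each in time $O(\norm{u_i(w)}_{w^0})$ via the $(K, w^0)$-non-sprawling hypothesis on $w$.

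The subtle point is the reducibility test of the super-piece $w_a w_{\mathrm{cur}} u_i(w)$ in Step~\ref{step:inner-reducibility-check}: the word length $\abs{k}+\abs{l}+O(1)$ of $w_{\mathrm{cur}}$ may far exceed the per-InnerStep budget $O(\norm{u_i(w)}_{w^0}+\norm{w_a}_{w^0})$, so $w_{\mathrm{cur}}$ must never be written out explicitly. Using $\tau_\alpha(G_\alpha) = \pi_\mu^{-1}(\langle a_\alpha\rangle)$, the test reduces to $\pi_\mu(w_a)\pi_\mu(w_{\mathrm{cur}})\pi_\mu(u_i(w)) \in \langle a_\alpha\rangle$, and $\pi_\mu(w_{\mathrm{cur}})$ lies in the peripheral coset $\langle a_\beta\rangle$, depending only on $l$. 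Malnormality of the family $\{\langle a_\beta\rangle\}$ in $F_\mu$ implies that when $\beta \neq \alpha$ the set of valid $l$ is either empty or a singleton, while when $\beta = \alpha$ the condition is independent of $l$ and holds iff both $\pi_\mu(w_a), \pi_\mu(u_i(w)) \in \langle a_\alpha\rangle$. In either case the test is performed in $O(\abs{w_a}+\abs{u_i(w)})$ time with Proposition~\ref{prop:quasi_convex}, and the target value of $l$ (whose magnitude is $O(\abs{w_a}+\abs{u_i(w)})$) is compared against the stored $l$ in the same time. The central coordinate $k$ is updated incrementally at cost $O(\abs{w_a}+\abs{u_i(w)})$ per InnerStep by tracking the cocycle of $G_\mu$ on the short increments, without ever materialising $w_{\mathrm{cur}}$ as a word.

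With these ingredients, each AlgStep with instigator $i_{\mathrm{start}}$ runs in time $O(\norm{u_{i_{\mathrm{start}}}(w)}_{w^0} + \norm{u_{j-1}(w)}_{w^0})$, where $j$ is the instigator of the next AlgStep: Steps~\ref{step:1}--\ref{step:wcur} contribute $O(\norm{u_{i_{\mathrm{start}}}(w)}_{w^0})$, each non-final InnerStep costs exactly the counter decrement it subtracts from $c$ (so their sum is bounded by the initial counter value $\norm{u_{i_{\mathrm{start}}}(w)}_{w^0}$), and the final InnerStep adds the extra $\norm{u_{j-1}(w)}_{w^0}$. Summing over all AlgSteps via length invariance (Lemma~\ref{lemma:basic-properties-derivations}\eqref{prop:length-invariance}) yields total time $O(\abs{w^0})$. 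The output $w'$ is $(K, w^0)$-non-sprawling because each newly formed piece is a concatenation of the $K$-quasi-geodesic normal form $c_\alpha^k o_\alpha^l h$ with suffixes of unchanged pieces of $w$, which by property~\eqref{prop:quasi-geodesic} of Lemma~\ref{lem:properties_of_vertex_groups} together with Remark~\ref{rem:quasi-implies-non-sprawling} is itself non-sprawling. The main obstacle is the incremental reducibility check of the third paragraph: rewriting the stored $w_{\mathrm{cur}}$ at each InnerStep would blow the runtime up to $\Omega(\abs{w^0}^2)$, and bypassing this requires the rigidity of peripheral cosets supplied by malnormality of $\{\langle a_\beta\rangle\}$ in $F_\mu$.
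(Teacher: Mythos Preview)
Your approach is essentially the paper's: same band layout, same edge-group membership subroutine (the paper packages it as Lemma~\ref{lemma:there-exists-alg-edge-group}), same trick of storing $w_{\mathrm{cur}}$ as a triple $(k,l,h)$ and discarding the $c_\alpha^k$ factor via centrality when testing reducibility, and the same per-AlgStep accounting $O(\norm{u_{i_{\mathrm{start}}}(w)}_{w^0}+\norm{u_{j-1}(w)}_{w^0})$ summed via length invariance.

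There is, however, a genuine error in your case analysis for the inner reducibility test. You split on $\beta=\alpha$ versus $\beta\neq\alpha$ and assert that when $\beta=\alpha$ the condition $\pi_\mu(w_a)\,\pi_\mu(w_{\mathrm{cur}})\,\pi_\mu(u_i(w))\in\langle a_\alpha\rangle$ is independent of $l$ and equivalent to both flanking factors lying in $\langle a_\alpha\rangle$. This is false: take $F_\mu$ free on $a,b$ with $a_\alpha=a$, $\pi_\mu(w_a)=b$, $\pi_\mu(u_i(w))=b^{-1}$; then $b\,a^m\,b^{-1}\in\langle a\rangle$ holds for $m=0$ and for no other $m$, so the condition depends on $l$ and can hold even though neither factor lies in $\langle a_\alpha\rangle$. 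With your test as stated the algorithm would declare this piece irreducible and output a word that is not a derivation of $w$. The correct dichotomy, which is what the paper implements, is whether \emph{both} $w_a$ and $u_i(w)$ already lie in $\tau_\alpha(G_\alpha)$: if so one updates $(k,l,h)$ directly in time $O(\abs{w_a}+\abs{u_i(w)})$; if not one feeds the word $w_a\,o_\beta^l h\,u_i(w)$ to the membership subroutine, and Lemma~\ref{lem:properties_of_vertex_groups}\eqref{prop:length_bound_for_edge_group} (which is precisely the quantitative malnormality bound you are reaching for) forces $l\leq K(\abs{w_a}+\abs{u_i(w)})$ whenever the answer is positive, so the test still fits the $c_{\mathrm{dif}}$ budget. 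Your $\beta\neq\alpha$ argument is correct but is already subsumed by this second branch.
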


We first need a preliminary lemma, which states the existence of an algorithm analogous to the one required by Lemma \ref{lemma:quick-middle-der-of-free-prod} for the free product case. 

\begin{lem}\label{lemma:there-exists-alg-edge-group}
Let $\mc G$ be an admissible graph of groups. There exists an algorithm that runs in $\mc O(n)$ that does the following.
\begin{enumerate}[label = (A2)]
    \item Given an edge $\alpha\in E(\Gamma)$ and a word $w$ over $S_{\alpha^+}$ with $\abs{w}\leq n$, determine whether $w$ represents an element in $\tau_{\alpha}(G_{\alpha})$ and if so compute $k, l$ and $h$ such that $(w, \alpha) \equiv(k, \ell, h)$.\label{alg:in-egde-group-check}
\end{enumerate}
\end{lem}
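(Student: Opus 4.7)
The strategy is to combine two previously established membership algorithms: Proposition~\ref{prop:quasi_convex} applied to the cyclic (hence quasiconvex) subgroup $\langle a_\alpha\rangle$ of the hyperbolic group $F_{\alpha^+}$, and Proposition~\ref{prop:central_word_problem} applied to the $\mathbb Z$-central extension $G_{\alpha^+}$. The key identity is $\tau_\alpha(G_\alpha) = \pi_{\alpha^+}^{-1}(\langle a_\alpha\rangle)$, so membership in the edge group is detected after projecting to the hyperbolic quotient.

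First, I would apply $\pi_{\alpha^+}$ to $w$ letter-by-letter to produce a word $\bar w$ of length $O(n)$ in $F_{\alpha^+}$, and run Proposition~\ref{prop:quasi_convex} in $O(n)$ to test whether $\bar w$ represents an element of $\langle a_\alpha\rangle$. If not, $w\notin\tau_\alpha(G_\alpha)$ and we output ``no''. Otherwise, I would arrange (taking $\{a_\alpha^{\pm 1}\}$ as the alphabet in which the quasi-geodesic output is written) to extract the unique integer $m$ with $\bar w=_{F_{\alpha^+}} a_\alpha^m$, of absolute value $O(n)$, as a signed letter count.

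Next, I would precompute once (independently of $n$) the integer $d\ne 0$ such that $\pi_{\alpha^+}(o_\alpha)=a_\alpha^d$ (nonzero because $o_\alpha$ has infinite order in $F_{\alpha^+}$) and, for each $h\in H_\alpha$, the integer $m_h$ with $\pi_{\alpha^+}(h)=a_\alpha^{m_h}$. For a decomposition $w=c_\alpha^k o_\alpha^l h$ we need $ld+m_h\equiv m$, i.e.\ $m\equiv m_h\pmod d$; by Lemma~\ref{lem:properties_of_vertex_groups}\eqref{item:finite_index} exactly one $h\in H_\alpha$ satisfies this congruence, and we then set $l=(m-m_h)/d$, with $|l|=O(n)$. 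This step takes $O(n)$ time (division of an $O(n)$-size integer by a constant).

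Finally, I would form the word $w' = w\cdot h^{-1}\cdot o_\alpha^{-l}$, of length $O(n)$ since $|h^{-1}|=O(1)$ and $o_\alpha\in S_{\alpha^+}$. By construction $\pi_{\alpha^+}(w')=1$, so $w'\in Z_{\alpha^+}=\langle c_\alpha\rangle$; Proposition~\ref{prop:central_word_problem} then computes in $O(n)$ the integer $k$ with $w'=c_\alpha^k$, and we output $(k,l,h)$. The only subtle point is that the quasi-geodesic output of Proposition~\ref{prop:quasi_convex} is to be interpreted in the alphabet $\{a_\alpha^{\pm 1}\}$ of the subgroup, rather than an arbitrary ambient alphabet; this is a direct specialization of the finite-state automaton in its proof, since at each transition the contribution to the total exponent $m$ is a precomputed bounded integer, so the running exponent can be maintained by the Turing machine in $O(n)$ time.
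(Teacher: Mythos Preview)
Your proof is correct and follows essentially the same strategy as the paper: both reduce membership in $\tau_\alpha(G_\alpha)$ to membership of $\pi_{\alpha^+}(w)$ in the quasiconvex subgroup $\langle a_\alpha\rangle$ via Proposition~\ref{prop:quasi_convex}, and both recover $k$ at the end by applying Proposition~\ref{prop:central_word_problem} to $w$ times the inverse of a word in $o_\alpha$ and $H_\alpha$ with the same projection.

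The only difference is in how $l$ and $h$ are extracted. The paper takes the output alphabet $T$ in Proposition~\ref{prop:quasi_convex} to consist of the projections of $o_\alpha^{\pm1}$ and of $H_\alpha$, obtaining a word directly in those symbols, and then uses rewriting rules to normalise it to a single $H_\alpha$-letter. You instead take $T=\{a_\alpha^{\pm1}\}$, read off the total exponent $m$, and determine $h$ and $l$ arithmetically from the congruence $m\equiv m_h\pmod d$. Your route makes the arithmetic explicit (and your uniqueness claim for $h$ is justified because $\ker\pi_{\alpha^+}=\langle c_\alpha\rangle\subset\langle c_\alpha,o_\alpha\rangle$, so the $\pi_{\alpha^+}$-images of the $h\in H_\alpha$ form a complete set of residues mod $d$); the paper's route avoids the division step by baking the coset structure into the automaton's output alphabet. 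Both are linear-time and equally valid.
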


\begin{proof}
    By Definition \ref{def:admissible}-\eqref{item:edge_groups}, an element $q$ of $G_{\alpha^+}$ lies in $\tau_\alpha(G_\alpha)$ if and only if $\pi_{\alpha^+}(g)$ lies in the subgroup $\langle a_\alpha\rangle$ of $F_{\alpha^+}$. This is a quasiconvex subgroup, so we can use the algorithm given by Proposition \ref{prop:quasi_convex} to check membership of $w$ in $\tau_\alpha(G_\alpha)$ in linear time. Moreover, choosing the generating set to which we apply the proposition to consist of the projections to $F_{\alpha^+}$ of $o_\alpha^{\pm1}$ and $H_\alpha$ (recall that $\pi_{\alpha^+}(c_\alpha)$ is trivial by Lemma \ref{lem:properties_of_vertex_groups}-\eqref{item:abelian}) we get as output of the algorithm a word $w'$ in $o_\alpha$ and $H_\alpha$ which represents an element of $G_{\alpha^+}$ with the same projection to $F_{\alpha^+}$ as the element represented by $w$. Moreover, the length of $w'$ is also in $\mc O(n)$. Scanning through $w'$ and using rewriting rules we can convert $w'$ into another word $w''$ in $o_\alpha^{\pm 1}$ and $H_\alpha$, again of length $\mc O(n)$ and with the same projection to $F_{\alpha^+}$, with exactly one letter $h$ from $H_\alpha$; this is indeed the required $h$ output, and the sum of the exponents of the occurrences of $o_\alpha$ is the required $\ell$. Finally, we have to determine $k$. If we append $w''$ to $w$ we obtain a word representing $c_\alpha^k$, so that $k$ can be determined in linear time using Proposition \ref{prop:central_word_problem}.
\end{proof}

We now show how to efficiently implement the middle derivation algorithm for admissible graph of groups, thereby proving Lemma~\ref{lemma:there-exists-alg-edge-group}. That is, we will explain how to turn the procedure from Section~\ref{sec:middle-derivations} into an efficient algorithm.

\textbf{Detailed Setup.} We have bands $\mf w_*$ and $\mf a _*$ where we store the word $w$ and $a$ (semi-)organizedly. We have bands $\mf c_e$ where we store a representative of the word $w_{cur}$. We have bands $\mf c_{counter}$ and $\mf c_{leng}$ where we store the value of $c$ and the $w^0$--derived length of $w_{cur}$. Further, we have auxiliary bands where we run the algorithm \ref{alg:in-egde-group-check}.

At the beginning of every step, we aim to ensure the following: 
\begin{itemize}
    \item the head of any of band $\mf w_*$ is at the end of its $i$-th segment.
    \item the head of any band of $\mf a_*$ is at the end of its band. 
\end{itemize}
Recall that since $w$ is $(K, w^0)$-non-sprawling we have $\abs{u_i(w)}\leq K\norm{u_i(w)}_{w^0}$ for all $1\leq i \leq n$. At all points in time we ensure that $a$ is $(K, w^0)$-non-sprawling.

With this, it takes $\mc O(\norm{u_{new(i)}(w)}_{w^0})$ time to increase $i$, since we also have to move the heads of the bands $\mf w_*$.

At the beginning of every \step we aim to ensure that: 
\begin{itemize}
    \item the bands $\mf c_e, \mf c_{counter}$ and $\mf c_{leng}$ are empty.
\end{itemize}

At the beginning of every \sstep we aim to ensure that:
\begin{itemize}
    \item the word $(g, \alpha)$ stored on the bands $\mf c_e$ represents the same element as $w_{cur}$
\end{itemize}

We do not store the value of $i$ (we ``know'' the value of $i$ by the position of the heads of the $\mf w_*$ bands). Similarly, we do not store $i_{start}$ or the sets $I_{start}$ and $I_{core}$, since they are not used during the algorithm. However, we use the those sets for analyzing the algorithm. 

Now we embellish the procedure which computes a middle derivation and turn it to an algorithm, showing how things are stored in the Turing machine $\mc M$. We will use the notation from the definition of the middle derivation.

\begin{enumerate}[label = \Alph*)]
    \item \textbf{Define $a := \alpha_*$.} \tim{This can be done in constant time.}\label{step:setup'}
    \item \textbf{Repeat the following.} We call each repetition an \emph{AlgStep}. \label{step:algrepeat'}
\begin{enumerate}
    \item \textbf{Increase $i$ by one.} \tim{This takes at most $\norm{u_{new(i)}(w)}_{w^0}$ time. }\label{step:1'}
    \item  \textbf{Check whether $i = n+1$.}\label{step:endcheck'} This can be done in constant time, we just have to check whether the cell after the current position of the head of $\edge{\mf w}$ reads the letter $\ast$. 
    \begin{itemize}
        \item \textbf{If $i = n+1$: Go to Step \ref{step:defofw''}} \tim{This takes constant time.}
        \item \textbf{If $i\neq n+1:$} Do nothing. \textit{Recall that for bookkeeping, we define $i_\mathrm{start}$ as $i$ and add it to $I_{start}$, but this is not represented on the Turing machine.} \tim{This takes constant time.} 
    \end{itemize}
    \item \textbf{Check whether $u_i(w)$ is $w$--reducible.}\label{step:backtrackcheck'} This involves two parts, we first check whether $t_{i-1}(w) = \bar{t}_{i} (w)$. If so, we use algorithm \ref{alg:in-egde-group-check} to check whether $u_i(w)\in \tau_{\bar{t}_i(w)}(G_{\bar{t}_i(w)})$. \tim{This takes at most $\abs{u_i(w)}$ time.}
        \begin{itemize}
            \item \textbf{ If $u_i(w)$ is not $w$--reducible:} Append $u_i(w)t_i(w)$ to $a$ and go to Step \ref{step:algrepeat'}. Namely, we have to append $t_i(w)$ to $\edge{\mf a}$, append $\# 1^{\norm{u_i(w)}_{w^0}}$ to $\leng{\mf a}$ and append $\# u_i(w)$ to $\word{\mf a}$. \tim{This takes at most $\norm{u_i(w)}_{w^0}$ time.} \gar{$new(a)$ is $(K, w^0)$-non-sprawling}
        \end{itemize}
    \item \textbf{Set the counter $c$ to $\norm{u_i(w)}_{w^0}$}\textit{and add $i_{start}$ to $I_{core}$.} We have to update $\mf c_{counter}$. \tim{ This takes at most $\norm{u_i(w)}_{w^0}$ time.} \label{step:set-counter'}
    \item\textbf{Set $w_{cur}$ to a word over $S_{t_i(w)}$ which satisfies $w_{cur} = _G t_a u_i(w) \bar{t}_a = \bar{t}_i(w) u_i(w) t_i(w)$ and remove $t_a$ from $a$.} We use Algorithm~\ref{alg:in-egde-group-check} on $(\bar{t}_i(w) u_i(w)t_i(w), t_i(w))$ which will output a decomposition $(w_{cur},t_{i}(w)) \equiv (k, \ell, h)$. We store the output on the bands $\mf c_e$. So while there are, in general, many possible choices for the word $w_{cur}$ \textbf{we choose $w_{cur}$ such that it is in edge normal form for the edge $t_i(w)$.} Next, we update $\mf c_{leng}$. Lastly, we delete the last edge from $\edge{\mf{a}}$. \tim{This takes at most $\norm{u_i(w)}_{w^0}$ time.}  \gar{ $\abs{w_{cur}}\leq K \norm{w_{cur}}_{w^0}$ and $c =2\norm{u_i(w)}_{w^0}- \norm{w_{cur}}_{w^0} $ and $w_{cur}$ is in edge normal form for the edge $new(t_a)$.} \label{step:curdef'}
    \item \textbf{Repeat the following}. \label{step:innerrep'}
    \begin{enumerate}[label = \roman*)]
        \item \textbf{Increase $i$.}\label{step:inner-increase'} \tim{This takes at most $\norm{u_{new(i)}(w)}_{w^0}$ time.}
        \item \textbf{Subtract $\norm{u_i(w)}_{w^0} + \norm{w_a}_{w^0}$ from $c$.} If $c$ is negative, we do not care about the precise value of it, so once $c$ becomes negative, we can stop subtracting. Define $c_{dif} =\min\{ \norm{u_i(w)}_{w^0} + \norm{w_a}_{w^0}, old(c)+1\}$, note that $c_{dif}$ is the exact amount we subtract from $c$. \tim{This step takes at most $c_{dif}$ time.} \label{step:update_counter2'} 
        \item \textbf{Check whether $c\geq 0$.} This takes constant time.\label{step:check-counter'}
        \begin{itemize}
            \item \textbf{If $c < 0$: Append $w_{cur}u_i(w)t_i$ to $a$. Go to step \ref{step:algrepeat'}.} Again, we have to update all bands in $\mf a_{\ast}$. Namely, we have to append $t_i(w)$ to $\edge{\mf a}$, append $1^{\norm{w_{cur}}_{w^0}+\norm{u_i(w)}_{w^0}}$ to $\leng{\mf a}$ and append $w_{cur}u_i(w)$ to $\word{\mf a}$. \tim{This takes at most $\norm{u_i(w)}_{w^0} + \norm{u_{i_{start}}(w)}_{w^0}$ time.} Lastly, we have to reset $c_{leng}$ and $c_{counter}$. \tim{This takes at most $\norm{u_{i_{start}}(w)}_{w^0}$ time.} Note that $\norm{w_a}_{w^0}$ increases by $\norm{u_i(w)}_{w^0} + \norm{w_{cur}}$ while $\abs{w_a}$ increases by $\abs{u_i(w)} +\abs{w_{cur}}\leq K(\norm{u_i(w)}_{w^0} + \norm{w_{cur}})$. \gar{$new(a)$ is $(K, w^0)$--non-sprawling.}
        \end{itemize}
        \item \textbf{Check whether the piece $w_aw_{cur}u_i(w)$ is $aw_{cur}w_i$--reducible.} Doing this efficiently is slightly involved and requires the following steps. \label{step:inner-reducibility-check'}
        \begin{enumerate}[label = \arabic*)]
            \item \textbf{Check whether  $t_a = \bar{t}_i(w)$. If not, go to step \ref{step:endofinnerstep'}.} If $t_a\neq \bar{t}_i(w)$, then $w_aw_{cur}u_i(w)$ is not $aw_{cur}w_i$--reducible. \tim{This takes constant time.}
            \item \textbf{Check whether $t_{i-1}(w) = \bar{t}_{i}(w)$,  $w_a\in \tau_{\bar{t}_i(w)}(G_{\bar{t}_i(w)})$ and $u_i(w)\in \tau_{\bar{t}_i(w)}(G_{\bar{t}_i})$.} We use algorithm \ref{alg:in-egde-group-check} to do this. \tim{This can be done in time at most $\abs{w_a} + \abs{u_i(w)}\preceq c_{dif}$.}
            \begin{itemize}
                \item \textbf{If so: Use algorithm \ref{alg:in-egde-group-check} to bring $w_a$ and $u_i(w)$ into edge normal form for edge $t_a = \bar{t}_i$ and add them to $w_{cur}$. Lastly, replace $w_{cur}$ with $t_aw_{cur}t_i(w)$. Go to Step \ref{step:innerrep'}} \textit{This works, since $old(w_{cur})$ is in edge normal form for the edge $\bar{t}_i(w) = t_{i-1}(w)$.} As described in Section \ref{sec:storing_in_edge_groups}, \tim{this can be done in time at most $\abs{w_a} + \abs{u_i(w)} \preceq c_{dif}$.} We also update $\mf c_{leng}$ since the derived length of $w_{cur}$ increases by $c_{dif}$. \tim{This takes at most $c_{dif}$ time.} \gar{$\abs{new(w_{cur})}\leq K \norm{new(w_{cur})}_{w^0}$, $c = 2\norm{u_{i_{start}}(w)}_{w^0} - \norm{w_{cur}}_{w^0}$ and $new(w_{cur})$ is in edge normal form for the edge $new(t_a)$.}
            \end{itemize}
            \item \textbf{Check whether $w_ao_{t_{i-1}(w)}^\ell hu_i(w)\in \tau_{t_a}(G_{t_a})$}. \textit{Here we define $\ell\in \Z$ and $h\in H_{t_a}$ such that $(w_{cur}, t_{i-1}(w)) \equiv (k, \ell, h)$. Note that 
            \begin{align*}
                 w_ao_{t_{i-1}(w)}^\ell hu_i(w)\in \tau_{t_a}(G_{t_a}) \quad \iff \quad  w_a w_{cur}u_i(w)\in \tau_{t_a}(G_{t_a})
            \end{align*}
            because $c_{t_{i-1}(w)}$ is in the center. We want to know whether the latter holds, but the former uses less time to check, which is why we check it instead.} We use algorithm \ref{alg:in-egde-group-check} to do this. \tim{This can be done in time at most $\abs{w_a}+\ell+ \abs{u_i(w)}$.} \label{step:check'}
            \begin{itemize}
                \item \textbf{If yes: Let $(k, \ell, h) \equiv (w_{cur} ,t_{i-1}(w))$ and $(w_ao_{t_{i-1}(w)}^\ell hu_i(w), t_{i-1}(w))\equiv (k', \ell', h')$. Replace $w_{cur}$ with the word in edge normal form for the edge $t_{i-1}(w)$ which is equivalent to $(k'+k, \ell', h')$. Lastly, replace $w_{cur}$ by $t_aw_{cur} t_i(w)$.} \textit{Since $c_{t_a} = c_{t_{i-1}(w)}$ is in the center, we have that $(k'+k, \ell', h') \equiv (w_a old(w_{cur}) u_i(w), t_a)$ and $new(w_{cur}) =  w_a old(w_{cur}) u_i(w)$.} \tim{This takes at most $\abs{w_a}+\abs{u_i(w)} +\ell$ time.} 
                We also have to update the derived $c_{leng}$ since the derived length of $w_{cur}$ increases by $c_{dif}$. \tim{This takes at most $ c_{dif}$ time.} \gar{$\abs{new(w_{cur})}\leq K \norm{new(w_{cur})}_{w^0}$, $w_{cur}$ is in edge normal form for the edge $t_{i}(w)$ and $c =2\norm{u_i(w)}_{w^0}- \norm{w_{cur}}_{w^0} $.}   
            \end{itemize}
             \item \textbf{Append $w_{cur}u_i(w)t_i(w)$ to $a$. Go to Step \ref{step:algrepeat'}.} \textit{We reach this step if and only if $w_aw_{cur}u_i(w)$ is not $aw_{cur}w_i$--reducible.} As in the case where $c<0$, we have to update all bands in $\mf a_{\ast}$ and reset the auxiliary bands. \tim{This takes at most $ \norm{u_i(w)}_{w^0} + \norm{u_{i_{start}}(w)}_{w^0}$ time.} \gar{$new(a)$ is $(K, w^0)$--non-sprawling.}\label{step:endofinnerstep'}
        \end{enumerate}
    \end{enumerate}

\end{enumerate}
\item \textbf{Define $w'$ as $a$.} \gar{$w'$ is $(K, w^0)$--non-sprawling.} \label{step:defofw''}
\end{enumerate}

In light of Lemma~\ref{lem:properties_of_vertex_groups}\eqref{prop:length_bound_for_edge_group}, all guarantees hold trivially. In light of Lemma~\ref{lemma:basic-properties-derivations}\eqref{prop:length-invariance}, to prove Lemma~\ref{lemma:compute_center_der_quickly}, it is enough to show that every iteration of \step can be done in time at most $\norm{u_{i_{start}}(w)}_{w^0} + \norm{u_{j-1}(w)}_{w^0}$, where $i_{start}$ is the instigator of that iteration and $j$ is the instigator of the next iteration.

Steps~\ref{step:1'} to \ref{step:set-counter'} can be done in time at most $\norm{u_i(w)}_{w^0}$. We claim that Step~\ref{step:innerrep''} takes at most $\norm{u_i(w)}_{w^0} + \norm{u_{j-1}(w)}_{w^0}$ time, which would conclude the proof.

 If the check in Step~\ref{step:check'} is positive, then by Lemma~\ref{lem:properties_of_vertex_groups}\eqref{prop:length_bound_for_edge_group}, $\ell\preceq \abs{w_a}+\abs{u_i(w)}\preceq c_{dif}$, so Step~\ref{step:check'} can be done in time at most $c_{dif}$. Thus, if a particular \sstep is not the last InnerStep, then it can be done in time at most $c_{dif}$. Since $c_{dif}$ is subtracted from our counter $c$ (whose initial value is $\norm{u_{i_{start}}(w)}_{w^0}$) and we stop repeating once $c$ becomes negative, performing all but the last of the \sstep iterations takes at most $\norm{u_{i_{start}}(w)}$ time.  Note that in Step~\ref{step:check'} $\ell\preceq \norm{w_{cur}}_{w^0}\leq 2\norm{u_{i_{start}}(w)}_{w^0}$, so if \sstep is the last step, we know it takes at most $\norm{u_{i_{start}}(w)}_{w^0} + \norm{u_{j-1}(w)}_{w^0}$, where the $\norm{u_{j-1}(w)}_{w^0}$ comes from Step~\ref{step:check-counter'}. So we indeed have the desired bound on the runtime of Step~\ref{step:innerrep'}, which concludes the proof of Lemma~\ref{lemma:compute_center_der_quickly}.

\subsection{Proof of Theorem \ref{thm:admissible}}

We now summarise how to prove Theorem \ref{thm:admissible} which states that the word problem for admissible graphs of groups can be solved in $O(n\log(n))$. We know that the word problem in the vertex groups can be solved in linear time by Proposition \ref{prop:central_word_problem}. Also, we know from Lemma \ref{lemma:compute_center_der_quickly} that ($(K,w^0)$-non-sprawling) middle derivations can be computed in linear time also. Hence, the word problem can be solved in $O(n\log(n))$ by Corollary \ref{cor:A12_implies_wp}, as required.\qed

\bibliographystyle{alpha}
\bibliography{biblio}

\end{document}